\newcommand{\lel}{\left\langle}
\newcommand{\rir}{\right\rangle}
\newcommand{\diag}{\text{diag}}
\newtheorem{thm}{Theorem}[section]
\newtheorem{definition}[thm]{Definition}
\newtheorem{lemma}[thm]{Lemma}
\newtheorem{prop}[thm]{Proposition}
\newtheorem{ass}[thm]{Assumption}
\title{Comparison and converse comparison theorems for backward stochastic differential equations with Markov chain noise}
\date{}
\author{Zhe Yang \thanks{Department of Mathematics and Statistics, University of Calgary, 2500 University Drive NW, Calgary, AB, T2N 1N4, Canada, yangzhezhe@gmail.com} \and Dimbinirina Ramarimbahoaka \thanks{Department of Mathematics and Statistics, University of Calgary, 2500 University Drive NW, Calgary, AB, T2N 1N4, Canada, dimbikeli@gmail.com}\and Robert J. Elliott \thanks{Haskayne School of Business, University of Calgary, 2500 University Drive NW, Calgary, AB, T2N 1N4, Canada, School of Mathematical Sciences, University of Adelaide, SA 5005, Australia, relliott@ucalgary.ca}}
\begin{document}
\maketitle
\begin{abstract}
Comparison and converse comparison theorems are important parts of the research on backward stochastic differential equations. In this paper, we obtain comparison results for one dimensional
backward stochastic differential equations with Markov
chain noise, extending and generalizing previous work under natural and simplified hypotheses, and establish a converse comparison theorem for the same type of equation after giving the definition and properties of a type of nonlinear expectation: $f$-expectation.
\end{abstract}
\section{Introduction}
In 1990 Pardoux and Peng \cite{ParPeng1} considered general backward stochastic differential equations (BSDEs for short) in the following form:
$$
Y_t = \xi + \int_t^T g(s, Y_s, Z_s) ds -\int_t^T  Z_sdB_s
,~~~~~t\in[0,T].
$$
Here $B$ is a Brownian Motion and $g$ is the driver or drift of the above BSDE.\\
\indent Since then, comparison theorems of BSDEs have attracted extensive attention. El Karoui, Peng and Quenez \cite{EPQ1997}, Cao
and Yan \cite{Cao99} and Lin \cite{Lin} derived comparison theorems
for BSDEs with Lipschitz continuous coefficients. Liu and Ren
\cite{Liu} proved a comparison theorem for BSDEs with linear growth
and continuous coefficients. Situ \cite{situ} obtained a comparison theorem for BSDEs
with jumps. Zhang \cite{zhang}
deduced a comparison theorem for BSDEs with two reflecting
barriers. Hu and Peng \cite{Hu2006} established a comparison theorem for
multidimensional BSDEs. Comparison theorems for BSDEs have received
much attention because of their importance in applications. For example, the
penalization method for reflected BSDEs is based on a comparison
theorem (see\cite{Cvitanic}, \cite{KKPPQ}, \cite{LS2000} and
\cite{pengxu2005}). Moreover, research on properties of
g-expectations (see, Peng \cite{Peng2004}) and the proof of a
monotonic limit theorem for
BSDEs (see, Peng \cite{Peng99}) both depend on comparison theorems. BSDEs with jumps were also introduced by many. Among others, we mention \cite{barles} and \cite{royer}. Crepey and Matoussi  \cite{crepy1} considered BSDEs with jumps in a more general framework where a Brownian motion is incorporated in the model and a general random measure is used to model the jumps, which in \cite{barles} is a Poisson random measure. \\
\indent It is natural to ask whether the converse of the above results holds or not. That is, if we can compare the solutions of two BSDEs with the same terminal conditions, can we compare the driver?  Coquet, Hu, M\'{e}min and Peng \cite{coquet}, Briand, Coquet, M\'{e}min and Peng \cite{briand}, and Jiang \cite{jiang} derived converse comparison theorems for BSDEs, with no jumps. De Schemaekere \cite{DeSch}, derived a converse comparison theorem for a model with jumps.\\
\indent In 2012, van der Hoek and Elliott \cite{RE1} introduced a
market model where uncertainties are modeled by a finite state
Markov chain, instead of Brownian motion or related jump diffusions,
which are often used when pricing financial derivatives. The
Markov chain has a semimartingale representation involving a vector martingale $M=\{M_t\in\mathbb{R}^N,~t\geq 0\}$. BSDEs in this
framework were introduced by Cohen and Elliott \cite{Sam1} as
$$ Y_t = \xi + \int_t^T f(s, Y_s, Z_s) ds -\int_t^T  Z'_sdM_s
,~~~~~t\in[0,T].
$$
\indent Cohen and Elliott \cite{Sam2} and \cite{Sam3} gave some comparison results for
multidimensional BSDEs in the Markov Chain model
under conditions involving not only the two drivers but also
the two solutions. If we consider two one-dimensional BSDEs driven
by the Markov chain, we extend the comparison result to a situation involving
conditions only on the two drivers.
Consequently our comparison results are easier to use for the one-dimensional case. Moreover, our result in the Markov chain framework needs less conditions on the drivers compared to those in Crepey and Matoussi \cite{crepy1} which are suitable for more general dynamics. \\
\indent Cohen and Elliott \cite{Sam3} also introduced a non-linear expectation: $f$-expectation based on the comparison results in the same paper. Using our comparison results, we shall give $f$-expectation a new definition for one-dimensional BSDEs with Markov chain and show similar properties as those in \cite{Sam3}. Then, we shall provide a converse comparison result for the same model with the use of $f$-expectation. \\
\indent The paper is organized as follows. In Section 2, we introduce the model and give some preliminary results. Section 3 shows our comparison result for one-dimensional BSDEs with Markov chain noise. We introduce the $f$-expectation and give its properties in Section 4. The last section establishes a converse comparison theorem.
\section{The  Model and Some Preliminary Results}\label{prelim}
\indent Consider a finite state Markov chain. Following
\cite{RE1} and \cite{RE2} of van der Hoek and Elliott, we assume the
finite state Markov chain $X=\{X_t, t\geq 0 \}$ is defined on the
probability space $(\Omega,\mathcal{F},P)$ and the state space of
$X$ is identified with the set of unit vectors $\{e_1,e_2\cdots,e_N\}$ in
$\mathbb{R}^N$, where $e_i=(0,\cdots,1\cdots,0) ' $ with 1 in the
$i$-th position. Then the  Markov chain has the semimartingale
representation:
\begin{equation}\label{semimartingale}
X_t=X_0+\int_{0}^{t}A_sX_sds+M_t.
\end{equation}
Here, $A=\{A_t, t\geq 0 \}$ is the rate matrix of the chain $X$ and
$M$ is a vector martingale (See Elliott, Aggoun and Moore
\cite{RE4}).
We assume the elements $A_{ij}(t)$ of $A=\{A_t, t\geq 0 \}$ are bounded. Then the martingale $M$ is square integrable.\\
\indent Take $\mathcal{F}_t=\sigma\{X_s ; 0\leq s \leq t\}$ to be
the $\sigma$-algebra  generated by the Markov process $X=\{X_t\}$
and $\{\mathcal{F}_t\}$ to be its filtration. Since $X$ is right continuous and has left
limits, (written RCLL), the filtration $\{\mathcal{F}_t\}$ is also
right-continuous. The following is given in Elliott \cite{elliott} as Lemma 2.21 :
\begin{lemma}\label{indistinguish}
Suppose $V$ and $Y$ are real valued processes defined on the same probability space $(\Omega, \mathcal{F},P)$ such that for every $t \geq 0$, $V_t = Y_t$, a.s. If both processes are right continuous, then $V$ and $Y$ are indistinguishable, that is:
$$P(V_t = Y_t,~ \text{for any}~ t\geq 0)=1. $$
\end{lemma}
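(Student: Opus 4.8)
The plan is to exploit the density of the rationals together with right continuity, so as to reduce an uncountable family of almost-sure equalities to a single almost-sure statement. The hypothesis supplies, for each fixed $t \geq 0$, a $P$-null set $N_t$ on whose complement $V_t(\omega) = Y_t(\omega)$. The central difficulty is that $[0,\infty)$ is uncountable, so the naive union $\bigcup_{t\geq 0} N_t$ need not be $P$-negligible and cannot simply be discarded; this is exactly the obstacle the proof must get around.

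To circumvent it, I would first restrict attention to rational times. Writing $\mathbb{Q}^+ = \mathbb{Q} \cap [0,\infty)$, set $N = \bigcup_{q\in\mathbb{Q}^+} N_q$. Being a countable union of null sets, $N$ satisfies $P(N)=0$, and for every $\omega \notin N$ we have $V_q(\omega) = Y_q(\omega)$ simultaneously for all $q \in \mathbb{Q}^+$. This is the key reduction: the exceptional set is now a fixed, genuinely negligible set independent of the time parameter.

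Next I would upgrade this from rational to arbitrary real times using right continuity. Fix $\omega \notin N$ and an arbitrary $t \geq 0$, and choose a sequence of rationals $q_n \downarrow t$. Since both sample paths $s \mapsto V_s(\omega)$ and $s \mapsto Y_s(\omega)$ are right continuous, passing to the limit yields $V_t(\omega) = \lim_n V_{q_n}(\omega) = \lim_n Y_{q_n}(\omega) = Y_t(\omega)$. Hence $V_t(\omega) = Y_t(\omega)$ holds for every $t \geq 0$ on the full-measure set $N^c$, which is precisely the claimed indistinguishability $P(V_t = Y_t \text{ for all } t \geq 0) = 1$.

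The only real subtlety is the uncountability obstruction isolated above; once the countable dense skeleton $\mathbb{Q}^+$ is fixed and right continuity is invoked, the remaining steps are routine. It is worth noting that no structure of the Markov chain enters at all — the result is a purely path-theoretic fact about right-continuous processes, which is why it can be quoted from \cite{elliott} and applied freely in what follows.
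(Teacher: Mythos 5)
Your proof is correct and is exactly the standard argument behind this result (the paper itself does not prove it, quoting it as Lemma 2.21 of \cite{elliott}, whose proof is precisely this rational-skeleton plus right-continuity argument). Nothing is missing: the countable union of null sets over $\mathbb{Q}\cap[0,\infty)$ and the limit along rationals $q_n \downarrow t$ settle the claim.
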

\indent The following product rule for semimartingales can be found in \cite{elliott}.
\begin{lemma}[Product Rule for Semimartingales]\label{ItoPR}
Let $Y$ and $Z$ be two scalar RCLL semimartingales, with no
continuous martingale part. Then
\begin{equation*}
Y_tZ_t = Y_TZ_T - \int_t^T Y_{s_-} dZ_s - \int_t^T Z_{s_-} dY_s -
\sum_{t < s \leq T} \Delta Z_s \Delta Y_s.
\end{equation*}
Here, $\sum\limits_{0< s \leq t} \Delta Z_s \Delta Y_s$ is the optional
covariation of $Y_t$ and $Z_t$ and is also written as $[Z,Y]_t$.
\end{lemma}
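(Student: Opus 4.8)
The statement is the integration-by-parts (product) formula written backwards in time, so the plan is to first establish the equivalent forward identity
\[
Y_tZ_t-Y_0Z_0=\int_0^t Y_{s_-}\,dZ_s+\int_0^t Z_{s_-}\,dY_s+[Z,Y]_t,
\]
and then subtract its values at the two endpoints $t$ and $T$. First I would fix a partition $0=t_0<t_1<\cdots<t_n=t$, telescope the product as
\[
Y_tZ_t-Y_0Z_0=\sum_i\bigl(Y_{t_{i+1}}Z_{t_{i+1}}-Y_{t_i}Z_{t_i}\bigr),
\]
and apply to each increment the elementary identity $ab-cd=c(b-d)+d(a-c)+(a-c)(b-d)$. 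This splits the telescoping sum into $\sum_i Y_{t_i}(Z_{t_{i+1}}-Z_{t_i})$, $\sum_i Z_{t_i}(Y_{t_{i+1}}-Y_{t_i})$, and the cross term $\sum_i(Y_{t_{i+1}}-Y_{t_i})(Z_{t_{i+1}}-Z_{t_i})$.

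The second step is to let the mesh of the partition tend to zero. Since $Y$ and $Z$ are RCLL, the left-endpoint values $Y_{t_i},Z_{t_i}$ are precisely the approximations of the left-continuous integrands $Y_{s_-},Z_{s_-}$, so by the definition of the stochastic integral against a semimartingale the first two sums converge, uniformly on compacts in probability, to $\int_0^t Y_{s_-}\,dZ_s$ and $\int_0^t Z_{s_-}\,dY_s$. The remaining cross term converges to the optional covariation $[Z,Y]_t$, which is exactly its defining approximation.

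The main obstacle, and the only place where the hypothesis of \emph{no continuous martingale part} enters, is the identification of this covariation limit with a pure jump sum. Writing the canonical decomposition $[Z,Y]_t=\lel Z^c,Y^c\rir_t+\sum_{0<s\le t}\Delta Z_s\Delta Y_s$, in which $Z^c,Y^c$ denote the continuous local-martingale parts, the hypothesis forces $\lel Z^c,Y^c\rir_t=0$ and leaves $[Z,Y]_t=\sum_{0<s\le t}\Delta Z_s\Delta Y_s$. The delicate point is to show that the accumulated contribution of the small increments across the partition collapses onto this absolutely convergent jump sum with no residual continuous bracket, which is where one must control $\sum(\Delta Y_s)^2$ and $\sum(\Delta Z_s)^2$. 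In the Markov chain framework of this paper the step is especially transparent: the martingale $M$ in \eqref{semimartingale} has only finitely many, uniformly bounded jumps on $[0,T]$ together with an absolutely continuous compensator, so every semimartingale considered here is in fact of finite variation and the cross sum reduces pathwise to $\sum\Delta Z_s\Delta Y_s$ by ordinary Stieltjes integration by parts.

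Finally I would rearrange the forward identity into the backward form of the statement by evaluating it at $T$ and at $t$ and transposing terms, using $[Z,Y]_T-[Z,Y]_t=\sum_{t<s\le T}\Delta Z_s\Delta Y_s$. Each such equality holds, for every fixed $t$, almost surely; since both sides are RCLL in $t$, Lemma \ref{indistinguish} upgrades this to an identity holding simultaneously for all $t$ with probability one, which is the indistinguishable form asserted.
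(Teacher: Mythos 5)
Your proof is correct, but there is no internal argument in the paper to measure it against: the paper does not prove this lemma at all, it simply quotes the product rule from Elliott's textbook \cite{elliott}. What you have written is the standard textbook derivation of that cited result: telescope the product over a partition using $ab-cd=c(b-d)+d(a-c)+(a-c)(b-d)$, let the mesh tend to zero so that the two left-endpoint Riemann sums converge uniformly on compacts in probability to $\int_0^t Y_{s-}\,dZ_s$ and $\int_0^t Z_{s-}\,dY_s$, identify the surviving cross sum with the optional covariation, and then use the decomposition $[Z,Y]_t=\langle Z^c,Y^c\rangle_t+\sum_{0<s\leq t}\Delta Z_s\Delta Y_s$ together with the hypothesis $Y^c=Z^c=0$ to collapse the bracket to the pure jump sum; the backward form and the upgrade to indistinguishability via Lemma \ref{indistinguish} then follow by arithmetic. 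The one substantive caveat is that the two heaviest ingredients --- the convergence theorem for Riemann sums of RCLL integrands against semimartingales, and the splitting of $[Z,Y]$ into its continuous bracket plus jump part --- are themselves nontrivial theorems that you invoke rather than prove, so your argument is a reduction to standard semimartingale theory rather than a from-scratch proof; this is entirely appropriate here, since the paper itself treats the whole statement as a citation. Your aside that in the Markov chain setting all relevant semimartingales are of finite variation (so the identity is ordinary Stieltjes integration by parts) is accurate and is arguably the cleanest route for how the lemma is actually used in the paper, but it is not needed: the lemma is stated for arbitrary scalar RCLL semimartingales with no continuous martingale part, and your general argument already covers that case.
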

For our (vector) Markov chain $X_t \in \{e_1,\cdots,e_N\}$,
note that $X_t X'_t = \diag(X_t)$. Also, $dX_t= A_t X_tdt+ dM_t$. By Lemma \ref{ItoPR}, we know  for $t\in[0,T],$
\begin{align}\label{1}
\nonumber
X_tX'_t &= X_0X'_0 + \int_0^t X_{s-} dX'_s + \int_0^t (dX_s) X'_{s-} + \sum_{0 < s \leq t} \Delta X_s \Delta X'_s \\
\nonumber
  &= \diag(X_0) + \int_0^t X_s (A_sX_s)' ds + \int_0^t X_{s-} dM'_s + \int_0^t A_s X_s X'_{s-} ds\\
\nonumber
& ~~ + \int_0^t (dM_s ) X'_{s-} + [X,X]_t \\
\nonumber
& = \diag (X_0) + \int_0^t X_s X'_s A'_s ds + \int_0^t X_{s-} dM'_s + \int_0^t A_s X_s X'_{s-} ds \\
&~~ + \int_0^t (dM_s) X'_{s-} + [X,X]_t
- \lel X,X\rir_t + \lel X,X \rir_t.
\end{align}
Here, $\lel X, X\rir$ is the unique predictable $N\times N$ matrix process such that
$[X,X]-\lel X,X \rir$ is a matrix valued martingale and write
\begin{equation}\label{L_t}
L_t = [X,X]_t - \lel X,X\rir_t, \quad t \in [0,T].
\end{equation}
However,
\begin{equation}\label{2}
X_tX'_t = \diag (X_t) = \diag(X_0) + \int_0^t \diag (A_s X_s)  ds +
\int_0^t \diag(M_s).
\end{equation}
Equating the predictable terms in \eqref{1} and \eqref{2}, we have
\begin{equation}\label{3}
\lel X, X\rir_t =  \int_0^t \diag(A_sX_s) ds - \int_0^t \diag(X_s)
A'_s ds - \int_0^t A_s \diag(X_s) ds.
\end{equation}
\indent For $n\in\mathbb{N}$, denote for $\phi \in \mathbb{R}^n$, the Euclidean norm $|\phi|_{n}=\sqrt{\phi'\phi}$ and for $\psi \in \mathbb{R}^{n\times n}$, the matrix norm $\|\psi\|_{n\times n}=\sqrt{Tr(\psi'\psi)}$.\\
Let $\Psi$ be the matrix
\begin{equation}\label{Psi}\Psi_t = \diag(A_tX_t)- \diag(X_t)A'_t - A_t \diag(X_t).
\end{equation}
Then $d\langle X,X\rangle_t=\Psi_tdt.$ For any $t>0$, Cohen and Elliott \cite{Sam1,Sam3}, define the semi-norm $\|.\|_{X_t}$, for
$C, D \in \mathbb{R}^{N\times K}$ as :
\begin{align*}
\lel C, D\rir_{X_t} & = Tr(C' \Psi_tD), \\[2mm]
\|C\|^2_{X_t} & = \lel C, C\rir_{X_t}.
\end{align*}
We only consider the case where $C \in \mathbb{R}^N$, hence we
introduce the semi-norm $\|.\|_{X_t}$ as:
\begin{align}\label{normC}
\nonumber
\lel C, D\rir_{X_t}  = C' \Psi_t D, \\[2mm]
\|C\|^2_{X_t}  = \lel C, C\rir_{X_t}.
\end{align}
It follows from equation \eqref{3} that
\[\int_t^T \|C\|^2_{X_s} ds = \int_t^T  C' d\lel X, X\rir_s C.\]
\indent Consider a one-dimensional BSDE with the Markov chain noise
as follows:
\begin{equation}\label{BSDEMC}
Y_t = \xi + \int_t^T f(s, Y_s, Z_s ) ds -\int_t^T  Z'_{s} dM_s
,~~~~~t\in[0,T].
\end{equation}
Here the terminal condition $\xi$ and the coefficient $f$ are known. For $t>0$, denote
\begin{align*}
& L^2(\mathcal{F}_t): =\{ \mathbb{R} \text{-valued}~ \mathcal{F}_t \text{-measurable random variables such that} E[|\xi|^2]< \infty\}.
\end{align*}
\indent Lemma \ref{existence} (Theorem 6.2 in Cohen and Elliott \cite{Sam1})
gives the existence and uniqueness result of solutions to the BSDEs
driven by Markov chains.
\begin{lemma}\label{existence}
Assume $\xi \in L^2(\mathcal{F}_T)$ and the predictable
function $f: \Omega \times [0, T] \times \mathbb{R} \times
\mathbb{R}^N \rightarrow \mathbb{R}$ satisfies a Lipschitz
condition, in the sense that there exists two constants $l_1, l_2>0$  such
that for each $y_1,y_2 \in \mathbb{R}$ and $z_1,z_2 \in
\mathbb{R}^{N}$,
\begin{equation}\label{Lipchl}
|f(t,y_1,z_1) - f(t, y_2, z_2)| \leq l_1 |y_1-y_2| +l_2 \|z_1
-z_2\|_{X_t}.
\end{equation}
We also assume $f$ satisfies
\begin{equation}\label{finite}
 E [ \int_0^T |f(t,0,0)|^2 dt] <\infty.
\end{equation}
 Then there exists a solution $(Y, Z)$
to the BSDE (\ref{BSDEMC}). Moreover, \\[2mm]
(1) $Y$ is an $\mathbb{R}$-valued adapted RCLL process satisfying $
E[\int_0^T |Y_s|^2 ds] <  \infty$;\\[2mm]
(2) $Z$ is a predictable vector process in $\mathbb{R}^N$ satisfying
$  E[\int_0^T \|Z_s\|_{X_s}^2 ds] < \infty; $\\[2mm]
 (3) this solution is
unique up to indistinguishability for $Y$ and equality $d\langle
M,M\rangle_t$ $\times\mathbb{P}$-a.s. for $Z$.
\end{lemma}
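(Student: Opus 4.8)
The plan is to establish existence and uniqueness by the standard contraction-mapping (Picard) argument carried out in a suitably weighted Banach space. First I would introduce the space $\mathcal{H}^2$ of pairs $(Y,Z)$, where $Y$ is an $\mathbb{R}$-valued adapted RCLL process and $Z$ is a predictable $\mathbb{R}^N$-valued process, endowed for a parameter $\beta>0$ with the norm
$$
\|(Y,Z)\|_\beta^2 = E\left[\int_0^T e^{\beta s}\left(|Y_s|^2 + \|Z_s\|_{X_s}^2\right) ds\right].
$$
For each fixed $\beta$ this is a Banach space, and the hypotheses \eqref{Lipchl} and \eqref{finite} on $f$ are exactly what is needed to keep all the integrals appearing below finite.

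\emph{The solution map.} Given $(U,V)\in\mathcal{H}^2$, I would freeze the generator and consider the BSDE whose driver $g(s):=f(s,U_s,V_s)$ no longer depends on the unknowns. This linear problem I would solve explicitly: set
$$
N_t = E\left[\xi + \int_0^T g(s)\, ds \,\Big|\, \mathcal{F}_t\right],
$$
which is a square-integrable martingale since $\xi\in L^2(\mathcal{F}_T)$ and \eqref{finite} holds. The martingale representation theorem for the Markov chain (Cohen and Elliott \cite{Sam1}) then furnishes a predictable $Z$ with $N_t = N_0 + \int_0^t Z'_s\, dM_s$, and
$$
Y_t = N_t - \int_0^t g(s)\, ds = E\left[\xi + \int_t^T g(s)\, ds \,\Big|\, \mathcal{F}_t\right]
$$
solves the frozen equation. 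This defines a map $\Phi(U,V)=(Y,Z)$; the orthogonality of the integral against $M$, together with \eqref{3} and the definition \eqref{Psi} of $\Psi_t$, shows $\Phi$ sends $\mathcal{H}^2$ into itself.

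\emph{The contraction estimate.} Take inputs $(U^1,V^1),(U^2,V^2)$ with images $(Y^1,Z^1),(Y^2,Z^2)$ and write $\delta Y=Y^1-Y^2$, $\delta Z=Z^1-Z^2$, $\delta U=U^1-U^2$, $\delta V=V^1-V^2$; note $\delta Y_T=0$ since both frozen equations share terminal value $\xi$. Applying the product rule of Lemma \ref{ItoPR} to $e^{\beta t}|\delta Y_t|^2$ and taking expectations, the jump term $\sum_{t<s\le T}(\Delta\delta Y_s)^2$ combines with its compensator and, via the identity $\int_t^T C'\, d\lel X,X\rir_s C = \int_t^T \|C\|_{X_s}^2\, ds$, produces the term $E[\int_0^T e^{\beta s}\|\delta Z_s\|_{X_s}^2\, ds]$, while the stochastic integral against $M$ has zero expectation. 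Bounding the resulting drift with the Lipschitz condition \eqref{Lipchl} and Young's inequality, and using the extra term $\beta\int_0^T e^{\beta s}|\delta Y_s|^2\, ds$ supplied by the weight to absorb the output $\delta Y$, I would choose $\beta$ large enough to obtain
$$
\|\Phi(U^1,V^1) - \Phi(U^2,V^2)\|_\beta^2 \le \tfrac{1}{2}\,\|(U^1,V^1)-(U^2,V^2)\|_\beta^2.
$$
The Banach fixed-point theorem then yields a unique fixed point $(Y,Z)\in\mathcal{H}^2$, which solves \eqref{BSDEMC}; properties (1) and (2) are built into $\mathcal{H}^2$, and the uniqueness in (3) follows from uniqueness of the fixed point together with Lemma \ref{indistinguish} for the $Y$-component.

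The delicate point throughout is the product-rule computation of the previous paragraph. Because $M$ is a purely discontinuous martingale rather than a Brownian motion, $|\delta Y|^2$ acquires the optional-covariation correction $\sum_{t<s\le T}(\Delta\delta Y_s)^2$, and one must verify carefully that after compensation this reduces exactly to the $\|\delta Z\|_{X_s}^2$ term governed by \eqref{3}. Getting these constants right, and hence pinning down the admissible range of $\beta$, is where the Markov-chain geometry encoded in $\Psi_t$ enters, and is the step that requires the most care.
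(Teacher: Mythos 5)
The paper offers no proof of this lemma: it is imported verbatim as Theorem 6.2 of Cohen and Elliott \cite{Sam1}, and the proof there is precisely the scheme you describe — martingale representation for the chain's filtration to solve the frozen-driver equation, followed by a fixed-point argument in a $\beta$-weighted norm, with the contraction estimate coming from the product rule and the identity $\int_t^T C'\, d\langle X,X\rangle_s\, C = \int_t^T \|C\|_{X_s}^2\, ds$. Your argument is sound as written; the only point worth making explicit is that $\|\cdot\|_{X_s}$ is merely a semi-norm, so your $\mathcal{H}^2$ is a Banach space of equivalence classes, which is exactly why uniqueness of $Z$ in part (3) holds only $d\langle M,M\rangle_t\times\mathbb{P}$-a.s.
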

The following lemma is an extension result to stopping time of Lemma \ref{existence} (see Cohen and Elliott \cite{Sam3}).
\begin{lemma}\label{BSDEST}
Suppose $\tau >0$ is a stopping time such that there exists a real value $T$ with $P(\tau > T)=0$, $\xi \in L^2(\mathcal{F}_{\tau})$ and $f$ satisfies \eqref{Lipchl} and \eqref{finite}, with integration from $0$ to $\tau$, then the BSDE
\begin{equation}
Y_t = \xi + \int_{t\wedge\tau}^{\tau} f(s, Y_s, Z_s ) ds -\int_{t\wedge\tau}^{\tau}  Z'_{s} dM_s
,~~~~~t\geq 0
\end{equation}
has a unique solution satisfying (1), (2) and (3) of Lemma \ref{existence}, with integration from $0$ to $\tau$.
\end{lemma}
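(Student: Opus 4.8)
\noindent\emph{Proof proposal.} The plan is to reduce the stopping-time problem to the fixed-horizon BSDE covered by Lemma~\ref{existence} by truncating the driver at $\tau$. Since $P(\tau>T)=0$ we have $\mathcal{F}_\tau\subseteq\mathcal{F}_T$, so $\xi\in L^2(\mathcal{F}_T)$. Define the truncated driver
$$\tilde f(s,\omega,y,z)=\mathbbm{1}_{\{s\le\tau(\omega)\}}\,f(s,\omega,y,z).$$
Because the stochastic interval $\{s\le\tau\}$ is predictable, $\tilde f$ is predictable; multiplying by an indicator cannot increase the Lipschitz bound, so $\tilde f$ still satisfies \eqref{Lipchl}, and $\int_0^T|\tilde f(s,0,0)|^2\,ds=\int_0^\tau|f(s,0,0)|^2\,ds$ gives \eqref{finite}. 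Hence Lemma~\ref{existence} applies on $[0,T]$ and yields a unique pair $(\tilde Y,\tilde Z)$ solving
$$\tilde Y_t=\xi+\int_t^T\tilde f(s,\tilde Y_s,\tilde Z_s)\,ds-\int_t^T\tilde Z_s'\,dM_s,\qquad t\in[0,T],$$
satisfying properties (1)--(3).

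The crucial step is to show that this solution freezes on $[\tau,T]$. For a fixed $t$, on the event $\{\tau\le t\}$ the driver integral $\int_t^T\mathbbm{1}_{\{s\le\tau\}}f\,ds$ vanishes, so $\tilde Y_t=\xi-\int_t^T\tilde Z_s'\,dM_s$; multiplying by the $\mathcal{F}_t$-measurable indicator $\mathbbm{1}_{\{\tau\le t\}}$ and taking $E[\,\cdot\mid\mathcal{F}_t]$, the martingale increment drops out and, using that $\xi\mathbbm{1}_{\{\tau\le t\}}$ is $\mathcal{F}_t$-measurable, one obtains $\tilde Y_t=\xi$ on $\{\tau\le t\}$. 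Since both sides are right continuous, Lemma~\ref{indistinguish} upgrades this to $\tilde Y_t=\xi$ for all $t\in[\tau,T]$ simultaneously. As $\tilde f$ vanishes on $(\tau,T]$, the relation $\tilde Y_t-\tilde Y_\tau=\int_\tau^t\tilde Z_s'\,dM_s$ forces this martingale to be identically zero on $[\tau,T]$; taking quadratic variation gives $E\big[\int_\tau^T\|\tilde Z_s\|_{X_s}^2\,ds\big]=0$, so $\tilde Z_s=0$, $d\langle M,M\rangle_s\times P$-a.s. on $(\tau,T]$.

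With the freezing established, I would set $Y_t:=\tilde Y_{t\wedge\tau}$ and $Z_t:=\tilde Z_t\mathbbm{1}_{\{t\le\tau\}}$. Evaluating the $[0,T]$ equation at $t\wedge\tau$ and using $\tilde f=0$, $\tilde Z=0$ on $(\tau,T]$ collapses the integrals from $T$ down to $\tau$, so $(Y,Z)$ solves the stated BSDE and inherits properties (1)--(3). For uniqueness, given any solution $(Y,Z)$ of the stopping-time equation one checks directly that for $t\ge\tau$ it already satisfies $Y_t=\xi$, so extending $Z$ by $0$ on $(\tau,T]$ produces a pair solving the $[0,T]$ equation with driver $\tilde f$; the uniqueness in Lemma~\ref{existence} then identifies it with $(\tilde Y,\tilde Z)$, giving uniqueness up to indistinguishability for $Y$ and $d\langle M,M\rangle\times P$-a.s. for $Z$. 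The main obstacle is the freezing step: verifying that truncating the driver genuinely makes the fixed-horizon solution constant and its control vanish after $\tau$, which is what legitimizes the reduction; the remaining verifications (predictability, preservation of \eqref{Lipchl}--\eqref{finite}, and the bookkeeping of integral limits) are routine.
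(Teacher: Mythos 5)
Your proof is correct: truncating the driver to $\mathbb{I}_{\{s\le\tau\}}f$, showing the fixed-horizon solution freezes after $\tau$ (with the martingale part forced to vanish via the zero predictable quadratic variation), and transferring existence and uniqueness back and forth are all sound, including the key measurability point that $\xi\,\mathbb{I}_{\{\tau\le t\}}$ is $\mathcal{F}_t$-measurable and the upgrade to indistinguishability via Lemma \ref{indistinguish}. The paper itself offers no proof of this lemma, deferring entirely to Cohen and Elliott \cite{Sam3}, and your truncation-and-freezing argument is essentially the standard one used in that reference, so your proposal in fact supplies the self-contained argument the paper omits.
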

See Campbell and Meyer \cite{campbell} for the following definition:
\begin{definition}[Moore-Penrose pseudoinverse]\label{defMoore}
The Moore-Penrose pseudoinverse of a square matrix $Q$ is the matrix $Q^{\dagger}$ satisfying the properties:\\[2mm]
  1) $QQ^{\dagger}Q = Q$ \\[2mm]
  2) $Q^{\dagger}QQ^{\dagger} = Q^{\dagger}$ \\[2mm]
  3) $(QQ^{\dagger})' = QQ^{\dagger}$ \\[2mm]
  4) $(Q^{\dagger}Q)'=Q^{\dagger}Q.$
\end{definition}
\indent Recall the matrix $\Psi$ given by \eqref{Psi}. We adapt Lemma 3.5 in Cohen and Elliott \cite{Sam3} for our one-dimensional framework as follows:
\begin{lemma}\label{sam35}
For any driver satisfying \eqref{Lipchl} and \eqref{finite}, for any $Y$ and $Z$
\[P(f(t,Y_{t-},Z_t) = f(t,Y_{t-}, \Psi_t\Psi_t^{\dagger} Z_t), ~\text{ for all}~ t \in [0,+\infty])=1\]
and
\[\int_0^t Z'_s dM_s = \int_0^t (\Psi_s\Psi^{\dagger}_s Z_s)' dM_s.\]
Therefore, without any loss of generality, assume  $Z=\Psi\Psi^{\dagger} Z$.
\end{lemma}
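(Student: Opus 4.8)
The plan is to exploit the fact that $\Psi_t$, defined in \eqref{Psi}, is symmetric (transposing $\diag(A_tX_t) - \diag(X_t)A_t' - A_t\diag(X_t)$ returns the same matrix) and positive semi-definite, the latter being exactly what makes $\|\cdot\|_{X_t}$ a genuine semi-norm. The symmetry is the crucial algebraic input. Writing $P_t := \Psi_t\Psi_t^{\dagger}$ and using the defining properties 1)--4) of the Moore--Penrose pseudoinverse from Definition \ref{defMoore}, I would first check that $P_t$ is symmetric by property 3) and idempotent, since $P_t^2 = \Psi_t(\Psi_t^{\dagger}\Psi_t\Psi_t^{\dagger}) = \Psi_t\Psi_t^{\dagger} = P_t$ by property 2). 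Next, $P_t$ has the same range as $\Psi_t$: for $v = \Psi_t w$ one has $P_t v = \Psi_t\Psi_t^{\dagger}\Psi_t w = \Psi_t w = v$ by property 1). Being a symmetric idempotent with range equal to $\mathrm{range}(\Psi_t)$, the complement $I - P_t$ projects onto $\mathrm{range}(\Psi_t)^{\perp} = \ker(\Psi_t') = \ker(\Psi_t)$, the last equality by symmetry. Consequently, for every $\omega$ and every $t$ the residual $(I - P_t)Z_t$ lies in $\ker(\Psi_t)$, so that $\Psi_t(I - P_t)Z_t = 0$.

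The central pathwise identity is then
\[
\|Z_t - \Psi_t\Psi_t^{\dagger}Z_t\|_{X_t}^2 = \big((I-P_t)Z_t\big)'\Psi_t(I-P_t)Z_t = 0,
\]
which follows at once from $\Psi_t(I-P_t)Z_t = 0$ and the definition \eqref{normC} of the semi-norm. Given this, the first assertion is a one-line consequence of the Lipschitz hypothesis \eqref{Lipchl}, taking $z_1 = Z_t$, $z_2 = \Psi_t\Psi_t^{\dagger}Z_t$ and holding $Y_{t-}$ fixed:
\[
|f(t,Y_{t-},Z_t) - f(t,Y_{t-},\Psi_t\Psi_t^{\dagger}Z_t)| \leq l_2\,\|Z_t - \Psi_t\Psi_t^{\dagger}Z_t\|_{X_t} = 0.
\]
Since this bound is pathwise and holds for every $t$, the ``for all $t$'' almost-sure statement is immediate.

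For the second assertion I would study the difference of stochastic integrals $N_t := \int_0^t \big((I-P_s)Z_s\big)' dM_s$. Recalling from \eqref{Psi} that $d\langle X,X\rangle_s = \Psi_s\,ds$, and that $M$ is the martingale part of $X$ so that $d\langle M,M\rangle_s = \Psi_s\,ds$, the predictable quadratic variation is $\langle N\rangle_t = \int_0^t \|(I-P_s)Z_s\|_{X_s}^2\,ds$, which vanishes identically by the central identity; equivalently, for $Z$ in the space of Lemma \ref{existence}(2) the It\^o isometry gives $E[N_t^2] = E\big[\int_0^t \|(I-P_s)Z_s\|_{X_s}^2\,ds\big] = 0$. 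Hence $N_t = 0$ a.s. for each fixed $t$, and since both $\int_0^{\cdot}Z_s'dM_s$ and $\int_0^{\cdot}(\Psi_s\Psi_s^{\dagger}Z_s)'dM_s$ are RCLL, Lemma \ref{indistinguish} upgrades this to equality for all $t$ simultaneously with probability one. The only step demanding care is the algebraic fact that $I - \Psi_t\Psi_t^{\dagger}$ annihilates $Z_t$ in the $\Psi_t$ semi-norm, i.e.\ that $I-P_t$ projects onto $\ker(\Psi_t)$; everything downstream is a routine application of the Lipschitz bound, the isometry (or vanishing quadratic variation), and the indistinguishability lemma. Since replacing $Z$ by $\Psi\Psi^{\dagger}Z$ alters neither the driver value nor the stochastic integral, the final normalization $Z = \Psi\Psi^{\dagger}Z$ is harmless.
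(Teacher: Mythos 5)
Your proof is correct. Note that the paper itself gives no proof of this lemma---it is stated as an adaptation of Lemma 3.5 in Cohen and Elliott \cite{Sam3}---and your argument is essentially the standard one used there: the symmetry of $\Psi_t$ together with the Moore--Penrose properties shows $\Psi_t(I-\Psi_t\Psi_t^{\dagger})Z_t=0$, hence $\|Z_t-\Psi_t\Psi_t^{\dagger}Z_t\|_{X_t}=0$, after which the Lipschitz condition \eqref{Lipchl} gives equality of the driver values and the It\^{o} isometry (with $d\langle M,M\rangle_t=\Psi_t\,dt$) plus Lemma \ref{indistinguish} gives indistinguishability of the stochastic integrals.
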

\section{A comparison theorem for one-dimensional BSDEs with Markov chain noise}\label{section3}
\begin{ass}\label{ass0}
Assume the Lipschitz constant $l_2$ of the driver $f$ given in \eqref{Lipchl} satisfies  $$~~~~~~l_2\|\Psi_t^{\dagger}\|_{N \times N} \sqrt{6m}\leq 1, ~~~\text{ for any }~t \in [0,T],$$ where $\Psi$ is given in \eqref{Psi} and $m>0$ is the bound of $\|A_t\|_{N\times N}$, for any $t\in[0,T]$.
\end{ass}
\begin{ass}\label{ass00}
Assume the Lipschitz constant $l_2$ of the driver $f$ given in \eqref{Lipchl} satisfies  $$l_2\|\Psi_t^{\dagger}\|_{N \times N} \sqrt{6m}< 1, ~~~\text{ for any }~t \in [0,T],$$ where $\Psi$ is given in \eqref{Psi} and $m>0$ is the bound of $\|A_t\|_{N\times N}$, for any $t\in[0,T]$.
\end{ass}
\indent For $i=1,2,$ suppose $(Y^{(i)},Z^{(i)})$ is the solution of one-dimensional
BSDE with Markov chain noise:
$$Y^{(i)}_t = \xi_i + \int_t^T f_i(s, Y^{(i)}_s, Z^{(i)}_s ) ds
- \int_t^T (Z_{s}^{(i)})' dM_s,\hskip.4cmt\in[0,T].$$
\begin{thm}\label{strict_theorem}
Assume $\xi_1,\xi_2 \in L^2(\mathcal{F}_T)$ and $f_1,f_2:\Omega \times [0,T]\times \mathbb{R}\times \mathbb{R}^N \rightarrow \mathbb{R}$ satisfy some conditions such that the above two BSDEs have unique solutions. Moreover assume $f_1$ satisfies \eqref{Lipchl} and Assumption \ref{ass0}.
If $\xi_1 \leq \xi_2 $, a.s. and $f_1(t,Y_t^{(2)}, Z_t^{(2)}) \leq f_2(t,Y_t^{(2)}, Z_t^{(2)})$, a.e., a.s., then
$$P( Y_t^{(1)}\leq Y_t^{(2)},~~\text{ for any } t \in [0,T])=1.$$
Moreover, if $f_1$ satisfies Assumption \ref{ass00},
$$
Y_0^{(1)}=Y_0^{(2)}
\Longleftrightarrow\left \{\begin{array}{lll}f_1(t,Y_t^{(2)},Z_t^{(2)})=f_2(t,Y_t^{(2)},Z_t^{(2)}),~\text{ a.e.,~a.s.;}\\[2mm]
 \xi_1=\xi_2, ~\text{a.s. } \end{array} \right.
$$
\end{thm}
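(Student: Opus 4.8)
The plan is to reduce both assertions to one linear representation for the difference process. Write $\hat Y_t = Y^{(2)}_t - Y^{(1)}_t$, $\hat Z_t = Z^{(2)}_t - Z^{(1)}_t$ and $\hat\xi = \xi_2 - \xi_1 \geq 0$. Subtracting the two BSDEs,
$$\hat Y_t = \hat\xi + \int_t^T\big[\Delta f_s + \big(f_1(s,Y^{(2)}_s,Z^{(2)}_s) - f_1(s,Y^{(1)}_s,Z^{(1)}_s)\big)\big]\,ds - \int_t^T\hat Z_s'\,dM_s,$$
where $\Delta f_s := f_2(s,Y^{(2)}_s,Z^{(2)}_s) - f_1(s,Y^{(2)}_s,Z^{(2)}_s)\geq 0$ by hypothesis. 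First I would linearize the $f_1$-increment by splitting it into a $y$-part and a $z$-part. The Lipschitz condition \eqref{Lipchl} produces a predictable scalar $a_s$ with $|a_s|\leq l_1$ (set $a_s=[f_1(s,Y^{(2)}_s,Z^{(2)}_s)-f_1(s,Y^{(1)}_s,Z^{(2)}_s)]/\hat Y_s$ where $\hat Y_s\neq0$ and $0$ otherwise) and, since by Lemma \ref{sam35} we may take $\hat Z_s = \Psi_s\Psi_s^{\dagger}\hat Z_s$ in the range of $\Psi_s$, a predictable vector $\beta_s$ in that range with $\langle\beta_s,\hat Z_s\rangle_{X_s}$ equal to the $z$-increment and $\|\beta_s\|_{X_s}\leq l_2$; the explicit choice $\beta_s = (\text{$z$-increment})\,\hat Z_s/\|\hat Z_s\|_{X_s}^2$ on $\{\|\hat Z_s\|_{X_s}\neq 0\}$ works. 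Because $\Psi_s$ is symmetric, the driver difference becomes $\Delta f_s + a_s\hat Y_s + \langle\beta_s,\hat Z_s\rangle_{X_s}$.

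Next I would introduce the adjoint (integrating-factor) process $\Gamma_{t,\cdot}$ solving $d\Gamma_{t,s} = \Gamma_{t,s-}(a_s\,ds + \beta_s'\,dM_s)$ with $\Gamma_{t,t}=1$, i.e. the product of $\exp(\int_t^s a_u\,du)$ with the stochastic exponential of $\int_t^{\cdot}\beta'\,dM$. Applying the product rule (Lemma \ref{ItoPR}) to $\Gamma_{t,s}\hat Y_s$ and using $d\langle X,X\rangle_s=\Psi_s\,ds$ together with the symmetry of $\Psi_s$, the finite-variation terms carrying $a_s\hat Y_s$ and $\langle\beta_s,\hat Z_s\rangle_{X_s}$ cancel (the distinction between $\hat Y_s$ and $\hat Y_{s-}$ being $ds$-negligible), leaving $d(\Gamma_{t,s}\hat Y_s) = -\Gamma_{t,s-}\Delta f_s\,ds + dN_s$ for a martingale $N$. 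Taking conditional expectations yields
$$\hat Y_t = E\Big[\Gamma_{t,T}\hat\xi + \int_t^T\Gamma_{t,s}\Delta f_s\,ds\ \Big|\ \mathcal F_t\Big],$$
so that, as $\hat\xi\geq0$ and $\Delta f_s\geq0$, the sign of $\hat Y_t$ is governed entirely by the sign of $\Gamma_{t,\cdot}$.

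The crux — and the step I expect to be the main obstacle — is showing $\Gamma_{t,\cdot}\geq0$ for the comparison and $\Gamma_{t,\cdot}>0$ for the equality characterization, together with the true-martingale property needed to justify the representation. Since the jumps of $M$ coincide with those of $X$, a jump from $e_i$ to $e_j$ multiplies the stochastic exponential by $1 + \beta_s'(e_j - e_i) = 1 + (\beta_{s,j}-\beta_{s,i})$, so I must bound $|\beta_{s,j}-\beta_{s,i}|\leq\sqrt2\,|\beta_s|_N$. Using $\beta_s=\Psi_s\Psi_s^{\dagger}\beta_s$, the positive semidefiniteness of $\Psi_s$, and the estimate $\|\Psi_s\|_{N\times N}\leq 3m$ coming from \eqref{Psi} and $\|A_s\|_{N\times N}\leq m$, I would derive $|\beta_s|_N\leq\|\Psi_s^{\dagger}\|_{N\times N}\sqrt{3m}\,\|\beta_s\|_{X_s}\leq l_2\|\Psi_s^{\dagger}\|_{N\times N}\sqrt{3m}$, whence $|\beta_{s,j}-\beta_{s,i}|\leq l_2\|\Psi_s^{\dagger}\|_{N\times N}\sqrt{6m}$. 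Assumption \ref{ass0} then forces this quantity to be $\leq1$, so every jump factor is $\geq0$ and $\Gamma_{t,\cdot}\geq0$; Assumption \ref{ass00} gives $<1$, hence strictly positive jump factors and $\Gamma_{t,\cdot}>0$. Boundedness of $a_s$ and $\beta_s$ and of the chain's intensities supplies the integrability making $N$ (and $\Gamma_{t,\cdot}$) true martingales.

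Finally I would read off both conclusions. Under Assumption \ref{ass0}, $\Gamma_{t,\cdot}\geq0$ gives $\hat Y_t\geq0$ for each fixed $t$; since $\hat Y$ is RCLL, Lemma \ref{indistinguish} upgrades this to $P(\hat Y_t\geq0\text{ for all }t\in[0,T])=1$, i.e. $Y^{(1)}\leq Y^{(2)}$. For the equivalence, the implication $\Leftarrow$ is immediate: if $\xi_1=\xi_2$ and the drivers agree at $(Y^{(2)},Z^{(2)})$, then $\hat\xi=0$ and $\Delta f\equiv0$, so the uniqueness in Lemma \ref{existence} forces $\hat Y\equiv0$ and in particular $Y_0^{(1)}=Y_0^{(2)}$. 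For $\Rightarrow$, under Assumption \ref{ass00} the weights $\Gamma_{0,T}$ and $\Gamma_{0,s}$ are strictly positive, so the representation at $t=0$ reads $\hat Y_0 = E[\Gamma_{0,T}\hat\xi + \int_0^T\Gamma_{0,s}\Delta f_s\,ds]=0$ with a nonnegative integrand; this forces $\hat\xi=0$ a.s. and $\Delta f_s=0$ a.e., a.s., which is precisely $\xi_1=\xi_2$ and $f_1(t,Y^{(2)}_t,Z^{(2)}_t)=f_2(t,Y^{(2)}_t,Z^{(2)}_t)$ a.e., a.s.
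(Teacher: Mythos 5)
Your proposal is correct and is essentially the paper's own argument: linearize the difference BSDE with a $y$-coefficient $a_s$ and a $z$-coefficient, represent the difference by duality against the Dol\'{e}an-Dade exponential of an adjoint linear SDE, and use Assumption \ref{ass0} (resp.\ Assumption \ref{ass00}) to bound each jump factor $1+\Delta V_u$ below by $0$ (resp.\ strictly above $0$), from which the comparison and the equality characterization follow exactly as in the paper. The only difference is one of parametrization: you encode the $z$-increment through the pairing $\langle \beta_s,\hat Z_s\rangle_{X_s}$ with $\beta_s$ proportional to $\hat Z_s/\|\hat Z_s\|_{X_s}^2$, so that $\Psi_s^{\dagger}$ enters through your norm bound $|\beta_s|_N \leq l_2\sqrt{3m}\,\|\Psi_s^{\dagger}\|_{N\times N}$ (which needs the positive semidefiniteness of $\Psi_s$), whereas the paper pairs Euclideanly ($b_s \hat Z_s$, with $|b_s|_N\leq l_2\sqrt{3m}$ via Lemma \ref{normbound}) and places $(\Psi_s^{\dagger})'$ explicitly in the dual SDE; both routes produce the identical jump estimate $l_2\sqrt{6m}\,\|\Psi_s^{\dagger}\|_{N\times N}\leq 1$.
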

{\bf Proof.}
Set $Y_t = Y_t^{(2)}- Y_t^{(1)}$, $Z_t = Z_t^{(2)}- Z_t^{(1)}$, $\xi= \xi_2-\xi_1$, $f_s =f_2(s, Y_s^{(2)}, Z_s^{(2)})$ $-f_1(s, Y_s^{(2)}, Z_s^{(2)})$, and define
\begin{align*}
a_s & =
\begin{cases}
\dfrac{f_1(s, Y_s^{(2)}, Z_s^{(2)})-f_1(s, Y_s^{(1)}, Z_s^{(2)})}{Y_s}, \quad ~~~\text{if}\quad Y_s \neq 0 ;\\
0, ~~~~~~~~~~~~~~~~~~~~~~~~~~~~~~~~~~~~~~~~~~~~~~~~\text{if}\quad Y_s=0
\end{cases}
\end{align*}
and
\begin{align*}
b_s & =
\begin{cases}
\dfrac{f_1(s, Y_s^{(1)}, Z_s^{(2)})-f_1(s, Y_s^{(1)}, Z_s^{(1)})}{|Z_s|^2}Z_s', \quad \text{if}\quad Z_s \neq 0; \\
0, ~~~~~~~~~~~~~~~~~~~~~~~~~~~~~~~~~~~~~~~~~~~~~~~~\text{if}\quad Z_s=0.
\end{cases}
\end{align*}
Then, we have:
\begin{align}\label{equivalence}
Y_t = \xi + \int_t^T (a_sY_s+b_sZ_s+f_s) ds - \int_t^T Z'_s dM_s, ~~~ t \in [0,T].
\end{align}
\begin{lemma}[Duality]\label{Duality}
For $t\in [0, T]$, consider the one-dimensional SDE
\begin{equation}\label{sde}
\begin{cases}
dU_s= U_s a_s ds + U_{s-} b_{s-} (\Psi_s^{\dagger})'dM_s, ~~ s \in [t,T];\\
~~U_t =1.
\end{cases}
\end{equation}
Then the solution of the one-dimensional linear BSDE \eqref{equivalence} satisfies
\begin{equation*}
P (Y_t = E [ \xi U_T + \int_t^T f_s U_s ds | \mathcal{F}_t], ~ \text{for any}~ t\in [0,T])=1.
\end{equation*}
\end{lemma}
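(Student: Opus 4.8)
The plan is to apply the semimartingale product rule (Lemma \ref{ItoPR}) to the product $U_s Y_s$ and to show that, after compensating the quadratic covariation term, all the drift contributions cancel except the one carrying $f_s$, leaving a representation in which the remaining terms form a martingale. A preliminary step is to note that \eqref{sde} is a linear SDE with bounded coefficients, since $|a_s|\le l_1$ by \eqref{Lipchl} and $b$ is controlled through the Lipschitz bound $|f_1(s,Y^{(1)}_s,Z^{(2)}_s)-f_1(s,Y^{(1)}_s,Z^{(1)}_s)|\le l_2\|Z_s\|_{X_s}$; hence $U$ exists, is unique, and has the moment bounds needed below.

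First I would record the differential forms. From the linear BSDE \eqref{equivalence}, $Y$ satisfies $dY_s = -(a_s Y_s + b_s Z_s + f_s)\,ds + Z'_s\,dM_s$, so its jumps are $\Delta Y_s = Z'_s\,\Delta M_s$; the SDE \eqref{sde} gives $dU_s = U_s a_s\,ds + U_{s-} b_{s-}(\Psi_s^{\dagger})'\,dM_s$ with $\Delta U_s = U_{s-} b_{s-}(\Psi_s^{\dagger})'\,\Delta M_s$. Applying Lemma \ref{ItoPR} to $U$ and $Y$ and using $U_t=1$, $Y_T=\xi$, I obtain
\[ Y_t = \xi U_T - \int_t^T U_{s-}\,dY_s - \int_t^T Y_{s-}\,dU_s - \sum_{t<s\le T}\Delta U_s\,\Delta Y_s. \]

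The crux is the covariation sum. Its compensator is governed by $d\langle M,M\rangle_s = \Psi_s\,ds$, which equals $d\langle X,X\rangle_s$ by \eqref{3} since the drift part of $X$ is continuous, so that the predictable part of the covariation is $d\langle U,Y\rangle_s = U_{s-} b_{s-}(\Psi_s^{\dagger})'\Psi_s Z_s\,ds$. Here I would use two facts: $\Psi_s$ is symmetric (immediate from \eqref{Psi}), whence $(\Psi_s^{\dagger})'=\Psi_s^{\dagger}$ and $\Psi_s^{\dagger}\Psi_s=\Psi_s\Psi_s^{\dagger}$; and Lemma \ref{sam35}, which allows the reduction $Z_s=\Psi_s\Psi_s^{\dagger}Z_s$. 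Together these collapse $(\Psi_s^{\dagger})'\Psi_s Z_s$ to $Z_s$, so $d\langle U,Y\rangle_s = U_s b_s Z_s\,ds$ (one may replace $Y_{s-},U_{s-}$ by $Y_s,U_s$ under $ds$, since the jump times form a set of Lebesgue measure zero). Collecting drift terms, the $U_s a_s Y_s\,ds$ from $-\int U_{s-}\,dY_s$ cancels the $-Y_s U_s a_s\,ds$ from $-\int Y_{s-}\,dU_s$, while the $U_s b_s Z_s\,ds$ from $-\int U_{s-}\,dY_s$ cancels the compensator contribution $-\int_t^T U_s b_s Z_s\,ds$. Only $\int_t^T U_s f_s\,ds$ survives, so $Y_t = \xi U_T + \int_t^T U_s f_s\,ds + \mathcal{M}_{t,T}$, where $\mathcal{M}_{t,T}$ gathers the stochastic integrals against $M$ and the martingale part of $[U,Y]-\langle U,Y\rangle$. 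Taking $E[\,\cdot\mid\mathcal{F}_t]$ and using the $\mathcal{F}_t$-measurability of $Y_t$ gives the pointwise identity, and Lemma \ref{indistinguish} upgrades it to hold simultaneously for all $t\in[0,T]$, a.s.

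I expect the main obstacle to be twofold. The delicate algebraic step is the covariation simplification: one must exploit both the symmetry of $\Psi_s$ and the projection identity $Z_s=\Psi_s\Psi_s^{\dagger}Z_s$ from Lemma \ref{sam35} to make the $b$-terms cancel cleanly; without the reduction to $Z=\Psi\Psi^{\dagger}Z$ the $b$-drift and the covariation would not match. The harder technical point is integrability: to pass from the local-martingale identity to the conditional-expectation formula I must verify that $\mathcal{M}_{t,T}$ is a genuine martingale, which requires moment bounds on $U$ (from the boundedness of $a$ and the control of $b$) combined with the $L^2$ estimates on $(Y,Z)$ from Lemma \ref{existence}.
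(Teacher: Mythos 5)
Your proposal is correct and follows essentially the same route as the paper's proof: apply the semimartingale product rule to $U_sY_s$, cancel the $a$- and $b$-drift terms using the compensator $d\langle M,M\rangle_s=\Psi_s\,ds$ together with the reduction $Z_s=\Psi_s\Psi_s^{\dagger}Z_s$ from Lemma \ref{sam35}, take conditional expectations, and upgrade to indistinguishability via Lemma \ref{indistinguish}. The only differences are cosmetic (you invoke the symmetry of $\Psi_s$ where the paper uses Moore--Penrose property 3, i.e.\ $(\Psi_s\Psi_s^{\dagger})'=\Psi_s\Psi_s^{\dagger}$, and you compensate $[U,Y]$ directly rather than splitting $[M,M]=\langle X,X\rangle+L$), and you are in fact more explicit than the paper about verifying that the residual local-martingale terms are genuine martingales.
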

\begin{proof}
Recall $[M,M]_t=[X,X]_t=\lel X,X \rir_t +L_t$ and $d \lel X,X\rir_t = \Psi_t dt$. Applying Ito's formula on $U_sY_s$, $t \leq s \leq T$, and using Lemma \ref{sam35}, we derive
\begin{align*}
d(U_sY_s) & = U_{s-} dY_s + Y_{s-}dU_s + d[U,Y]_s \\[2 mm]
& = - U_s a_s Y_s ds - U_s b_s Z_s ds - U_s f_s ds + U_{s-}Z'_{s} dM_s  + Y_s U_s a_s ds \\[2 mm]
& ~~ + Y_{s-}U_{s-}b_{s-} (\Psi_s^{\dagger})'dM_s + Z'_s \Delta M_s U_{s-}b_{s-} (\Psi_s^{\dagger})' \Delta M_s\\[2 mm]
& =  - U_s b_s Z_s ds - U_s f_s ds + U_{s-}Z'_{s} dM_s   \\[2 mm]
& ~~ + Y_{s-}U_{s-}b_{s-} (\Psi_s^{\dagger})'dM_s + Z'_s \Delta M_s \Delta M'_s \Psi_s^{\dagger} U_{s-}b'_{s-}  \\[2 mm]
& =  - U_s b_s Z_s ds - U_s f_s ds + U_{s-}Z'_{s} dM_s   \\[2 mm]
& ~~ + Y_{s-}U_{s-}b_{s-} (\Psi_s^{\dagger})'dM_s +  Z'_sd[M ,M]_s\Psi_s^{\dagger}U_{s-}b'_{s-}\\[2 mm]
& = - U_s b_s Z_s ds - U_s f_s ds + U_{s-}Z'_s dM_s  \\[2 mm]
& ~~ + Y_{s-}U_{s-}b_{s-} (\Psi_s^{\dagger})'dM_s +  Z'_s\Psi_s \Psi_s^{\dagger}U_sb'_s ds + Z'_sdL_s \Psi_s^{\dagger} U_{s-}b'_{s-}\\[2mm]
&= - U_s b_s Z_s ds - U_s f_s ds + U_{s-}Z'_s dM_s  \\[2 mm]
& ~~ + Y_{s-}U_{s-}b_{s-} (\Psi_s^{\dagger})'dM_s +  Z'_sU_sb'_s ds + Z'_sdL_s \Psi_s^{\dagger} U_{s-}b'_{s-} \\[2 mm]
& =  - U_s f_s ds + U_{s-}Z'_s dM_s  + Y_{s-}U_{s-}b_{s-} (\Psi_s^{\dagger})'dM_s  + Z'_sdL_s \Psi_s^{\dagger} U_{s-}b'_{s-}.
\end{align*}
Integrating both sides of above equation from $t$ to $T$ and taking the expectation given $\mathcal{F}_t$, we deduce for any $t \in [0,T]$,
$$
Y_t = E [ \xi U_T + \int_t^T f_s U_s ds |\mathcal{F}_t], ~~~ \text{a.s.}
$$
Since $Y_{\cdot}$ and $E [ \xi U_T + \int_\cdot^T f_s U_s ds |\mathcal{F}_{\cdot}]$ are both RCLL, by Lemma \ref{indistinguish}, the result holds.
\end{proof}
\begin{lemma}\label{normbound}
For any $C \in \mathbb{R}^N$,
$$ ~~~~\|C\|_{X_t} \leq \sqrt{3m} |C|_N, ~~\text{ for any }t\in[0,T],$$
where $m>0$ is the bound of $\|A_t\|_{N\times N}$, for any $t\in[0,T]$.
\end{lemma}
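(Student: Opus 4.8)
The plan is to prove the estimate directly, by bounding the quadratic form $\|C\|_{X_t}^2 = C'\Psi_t C$ term by term using the explicit three-term structure of $\Psi_t$ in \eqref{Psi}. Since $\|C\|_{X_t}^2$ is nonnegative, it suffices to show $C'\Psi_t C \le 3m|C|_N^2$ and then take square roots. I would begin by recording two elementary facts that make each term manageable. First, because $X_t$ is one of the unit vectors $e_1,\dots,e_N$, we have $|X_t|_N=1$, and $\diag(X_t)$ is simply the orthogonal projection onto the coordinate axis of the active state; consequently $\diag(X_t)C = C_j e_j$ for the current state index $j$, so that $|\diag(X_t)C|_N = |C_j| \le |C|_N$. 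Second, the Frobenius norm dominates the spectral norm, so for every $v\in\mathbb{R}^N$ one has $|A_t v|_N \le \|A_t\|_{N\times N}\,|v|_N \le m|v|_N$, and the same bound holds for $A_t'$ since $\|A_t'\|_{N\times N}=\|A_t\|_{N\times N}$.

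Next I would split $C'\Psi_t C$ into its three contributions. For the diagonal term, $C'\diag(A_tX_t)C = \sum_i (A_tX_t)_i\, C_i^2$, and since each component satisfies $|(A_tX_t)_i|\le |A_tX_t|_N \le m|X_t|_N = m$, this piece is bounded in absolute value by $m\sum_i C_i^2 = m|C|_N^2$. For the second term, writing $C'\diag(X_t)A_t'C = \big(\diag(X_t)C\big)'\big(A_t'C\big)$ and applying the Cauchy--Schwarz inequality gives the bound $|\diag(X_t)C|_N\,|A_t'C|_N \le |C|_N\cdot m|C|_N = m|C|_N^2$. The third term, $C'A_t\diag(X_t)C = \big(A_t'C\big)'\big(\diag(X_t)C\big)$, is estimated identically and is again at most $m|C|_N^2$. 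Combining the three estimates by the triangle inequality yields $C'\Psi_t C \le 3m|C|_N^2$, and hence $\|C\|_{X_t}\le\sqrt{3m}\,|C|_N$.

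This argument is short, and the only real decision is how to factor each summand so that Cauchy--Schwarz and the two preliminary facts apply cleanly. The most delicate point, which I would verify carefully, is the handling of the mixed terms involving both $A_t$ and $\diag(X_t)$: one must group the factors as $\big(\diag(X_t)C\big)'\big(A_t'C\big)$ rather than, say, bounding $\|A_t\diag(X_t)\|$ directly, so that the unit-vector structure of $X_t$ is exploited to gain the factor $|C_j|\le|C|_N$ instead of a cruder estimate. No further subtlety is expected.
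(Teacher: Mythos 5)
Your proof is correct and follows essentially the same route as the paper: both split $\Psi_t$ into its three terms via the triangle inequality and bound each term's contribution by $m|C|_N^2$, yielding the same constant $3m$. The only difference is mechanical: you bound the quadratic form $C'\Psi_t C$ term by term with Cauchy--Schwarz (exploiting $\diag(X_t)C = C_j e_j$), whereas the paper first factors out $|C|_N^2$ and then bounds the Frobenius norms $\|\diag(A_tX_t)\|_{N\times N}$, $\|\diag(X_t)A_t'\|_{N\times N}$ and $\|A_t\diag(X_t)\|_{N\times N}$ each by $m$ using submultiplicativity.
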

\begin{proof} Since the elements $A_{ij}(t)$ of $A=\{A_t, t\geq 0 \}$ are bounded, there exists a constant $m>0$ such that $\|A_t\|_{N \times N} \leq m$, for any $t\in[0,T]$.
From the definition in \eqref{normC}, we have:
\begin{align*}
\|C\|^2_{X_t}& \leq  |C|^2_N \cdot\|\diag(A_tX_t)- \diag(X_t)A'_t - A_t \diag(X_t)\|_{N\times N} \\[2mm]
& \leq  |C|^2_N \cdot(\|\diag(A_tX_t)\|_{N\times N}+\|\diag(X_t)A'_t\|_{N\times N} +\| A_t \diag(X_t)\|_{N\times N} )\\[2mm]
& \leq  |C|^2_N \cdot(|A_tX_t|_{ N}+|X_t|_N\cdot\|A_t\|_{N\times N} +\| A_t\|_{N\times N}\cdot|X_t|_N )\\[2mm]
& \leq  |C|^2_N \cdot(\|A_t\|_{N\times N}\cdot|X_t|_{ N}+|X_t|_N\cdot\|A_t\|_{N\times N} +\| A_t\|_{N\times N}\cdot|X_t|_N )\\[2mm]
& \leq  3 |C|^2_N\cdot \|A_t\|_{N\times N} \leq 3 m |C|_N^2~.
\end{align*}
\end{proof}

We go back to the proof of Theorem \ref{strict_theorem}. We follow the notations in Lemma \ref{Duality}. Denote $$dV_s = a_sds + b_{s-} (\Psi_s^{\dagger})'dM_s,~~s\in[0,T].$$ The solution to SDE \eqref{sde} is given by the Dol\'{e}an-Dade exponential (See \cite{elliott}):
\[U_s = \exp(V_s - \frac{1}{2}\lel V^c,V^c\rir_s)\prod_{0 \leq u\leq s} (1+\Delta V_u )e^{-\Delta V_u},~~s\in[0,T],\]
 where $$\Delta V_u = b_{u-} (\Psi_u^{\dagger})' \Delta M_u = b_{u-} (\Psi_u^{\dagger})' \Delta X_u.$$
 If $f_1$ satisfies Assumption \ref{ass0}, we deduce
\begin{align*}
|\Delta V_u| &\leq |b_{u-} |_N \cdot \|(\Psi_u^{\dagger})'\|_{N\times N} \cdot |\Delta X_u|_N \\& \leq l_2 \dfrac{\|Z_u\|_{X_u}}{|Z_u|_N}\dfrac{1}{\sqrt{6m}l_2}\sqrt{2} \\
& \leq \sqrt{3m}l_2 \dfrac{1}{\sqrt{6m}l_2}\sqrt{2}\\
& =1.
\end{align*}
 hence we have $U_s \geq0$ for any $s\in[0,T]$. By Lemma \ref{Duality}, we know for any $t \in [0,T]$,
 $$
Y_t = E [ \xi U_T + \int_t^T f_s U_s ds |\mathcal{F}_t] , ~a.s.
$$
 As $\xi \geq 0$, a.s., and $f_s \geq 0$, a.e., a.s., it follows that for any $t \in [0,T]$, $Y_t \geq 0$,  a.s. Since $Y_{\cdot}$ and $E [ \xi U_T + \int_\cdot^T f_s U_s ds |\mathcal{F}_{\cdot}]$ are both RCLL, by Lemma \ref{indistinguish},
 \[P(Y_t \geq 0, ~ \text{for any } t \in [0,T])=1.\]
  Moreover, if $f_1$ satisfies Assumption \ref{ass00}, then $U_s >0$, for any $s\in[0,T]$. Hence, $$Y_0=0\Longleftrightarrow \xi=0,~ a.s.,~ \text{and}~ f_t=0,~ a.e.,~ a.s.$$
\section{$f$-expectation}
Now we introduce the nonlinear expectation: $f$-expectation. The $f$-expectation, for a fixed driver $f$, is an interpretation of the solution to a BSDE as a type of nonlinear expectation. Here, we give the one-dimensional case of the definitions and properties in Cohen and Elliott \cite{Sam3}, based on our comparison theorems.
\begin{ass}\label{ass1}
Suppose $f:\Omega \times [0,T]\times \mathbb{R}\times \mathbb{R}^N \rightarrow \mathbb{R}$ satisfies \eqref{Lipchl} and \eqref{finite} such that\\[2mm]
(I) For all $(t,y) \in \mathbb{R}\times\mathbb{R}$, $f(t,y,0)=0$, a.s.;\\[2mm]
(II) For all $(y,z) \in \mathbb{R}\times\mathbb{R}^N$, $t \rightarrow f(t,y,z)$ is continuous.
\end{ass}
In this section, we suppose the driver $f$ satisfies Assumption \ref{ass00} and Assumption \ref{ass1}.
Before introducing the $f$-expectation, we shall give the following definition:
\begin{definition}\label{f_evaluation}
For a fixed driver $f$, given $t \in [0, T]$ and $\xi \in L^2(\mathcal{F}_t)$, define for each $ s\in [0,t]$ ,
 $$\mathcal{E}_{s,t}^f(\xi)= Y_s,$$
  where $(Y,Z)$ is the solution of
$$ Y_s = \xi + \int_s^t f(u, Y_u, Z_u ) du -\int_s^t  Z'_{u} dM_u,~~ s\in[0,t].$$
\end{definition}
\begin{prop}\label{f_eval_lem}
$\mathcal{E}_{s,t}^{f}(\cdot)$ defined above satisfies:\\
(1) For any  $\xi \in L^2(\mathcal{F}_s)$, $\mathcal{E}_{s,t}^f(\xi) = \xi$, a.s.\\
(2) If for any $\xi_1,\xi_2 \in L^2(\mathcal{F}_t)$, $\xi_1 \geq \xi_2$, a.s., then $\mathcal{E}_{s,t}^f(\xi_1) \geq \mathcal{E}_{s,t}^f(\xi_2)$. Moreover, $$\mathcal{E}_{s,t}^f(\xi_1) = \mathcal{E}_{s,t}^f(\xi_2)\Longleftrightarrow\xi_1=\xi_2, ~\text{a.s.}$$
(3) For any $r \leq s \leq t$, $\mathcal{E}^f_{r,s}(\mathcal{E}_{s,t}^f(\xi)) =\mathcal{E}^f_{r,t}(\xi)$, a.s.\\
(4) For any $A\in \mathcal{F}_s$, $\mathbb{I}_{A}\mathcal{E}^{f}_{s,t}(\xi) = \mathbb{I}_A \mathcal{E}^{f}_{s,t}(\mathbb{I}_A\xi)$, a.s.
\end{prop}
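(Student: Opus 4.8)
The plan is to derive all four properties from the two tools already available: the existence-and-uniqueness result (Lemma \ref{existence}) and the comparison Theorem \ref{strict_theorem}, together with the normalization $f(t,y,0)=0$ built into Assumption \ref{ass1}. Properties (1), (3) and (4) are in essence uniqueness arguments: in each case I would exhibit an explicit candidate pair $(Y,Z)$ solving the BSDE in question and then invoke uniqueness to identify it with the solution defining the $f$-evaluation. Property (2) is the place where the comparison theorem genuinely enters.

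For (1), given $\xi\in L^2(\mathcal{F}_s)$ I would check that the constant pair $(Y_u,Z_u)=(\xi,0)$, $u\in[s,t]$, solves the defining BSDE: it is adapted on $[s,t]$ because $\xi$ is $\mathcal{F}_s$-measurable, hence $\mathcal{F}_u$-measurable for $u\ge s$, and the driver contribution vanishes since $f(u,\xi,0)=0$ by Assumption \ref{ass1}(I). Uniqueness then forces $\mathcal{E}^f_{s,t}(\xi)=Y_s=\xi$. For (2), the inequality $\mathcal{E}^f_{s,t}(\xi_1)\ge\mathcal{E}^f_{s,t}(\xi_2)$ when $\xi_1\ge\xi_2$ is a direct application of Theorem \ref{strict_theorem} on $[0,t]$ with the single driver $f=f_1=f_2$, so that the driver-comparison hypothesis holds trivially with equality. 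For the equivalence $\mathcal{E}^f_{s,t}(\xi_1)=\mathcal{E}^f_{s,t}(\xi_2)\Leftrightarrow\xi_1=\xi_2$, I would use the duality representation of Lemma \ref{Duality} with terminal time $t$: since the two drivers coincide the source term vanishes, so the difference of the two solutions satisfies $Y_s=E[(\xi_1-\xi_2)U_t\mid\mathcal{F}_s]$, where under Assumption \ref{ass00} the stochastic exponential $U$ is strictly positive; as $\xi_1-\xi_2$ has a definite sign, a vanishing conditional expectation of $(\xi_1-\xi_2)U_t$ forces $\xi_1=\xi_2$ a.s.

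For (3) I would take the solution $(Y,Z)$ on $[r,t]$ with terminal $\xi$, so that $\mathcal{E}^f_{r,t}(\xi)=Y_r$ and $\mathcal{E}^f_{s,t}(\xi)=Y_s$, and observe that splitting $\int_u^t=\int_u^s+\int_s^t$ shows that $(Y,Z)$ restricted to $[r,s]$ solves the BSDE on $[r,s]$ with terminal value $Y_s$; uniqueness then gives $\mathcal{E}^f_{r,s}(Y_s)=Y_r$, which is exactly the claim. For (4), with $(Y,Z)$ the solution for terminal $\xi$ and $A\in\mathcal{F}_s$, I would verify that $(\mathbb{I}_AY,\mathbb{I}_AZ)$ solves the BSDE on $[s,t]$ with terminal $\mathbb{I}_A\xi$: since $A\in\mathcal{F}_s$, the factor $\mathbb{I}_A$ is $\mathcal{F}_u$-measurable for $u\ge s$ and can be pulled through both the Lebesgue and the stochastic integrals, and one has $\mathbb{I}_Af(v,Y_v,Z_v)=f(v,\mathbb{I}_AY_v,\mathbb{I}_AZ_v)$, which is immediate on $A$ and holds on $A^c$ because $f(v,0,0)=0$ by Assumption \ref{ass1}(I). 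Uniqueness then identifies $\mathbb{I}_AY$ with the solution for $\mathbb{I}_A\xi$, and multiplying by $\mathbb{I}_A$ yields the stated identity.

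The step I expect to be most delicate is the equivalence half of (2). The inequality and the reverse implication are immediate, but the forward implication rests on the strict positivity of $U$, which is precisely what Assumption \ref{ass00} (rather than \ref{ass0}) supplies. Although the strict part of Theorem \ref{strict_theorem} is recorded only at the initial time $Y_0$, the representation in Lemma \ref{Duality} already holds at every time $s$ after conditioning on $\mathcal{F}_s$, so the argument transfers with no extra work. A secondary technical point is the passage in (1) from $f(u,y,0)=0$ for each fixed $y$ to $f(u,\xi,0)=0$ for the random $\mathcal{F}_s$-measurable terminal $\xi$; I would dispatch this by approximating $\xi$ with simple random variables and using the Lipschitz continuity of $f$ before invoking uniqueness.
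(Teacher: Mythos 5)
Your proposal is correct and follows essentially the same route as the paper: (1), (3) and (4) are treated as uniqueness arguments resting on Assumption \ref{ass1}(I) and Lemma \ref{existence}, and (2) is obtained from the comparison Theorem \ref{strict_theorem}. The only difference is one of detail: where the paper simply cites Theorem \ref{strict_theorem} for the equivalence in (2), you rerun its duality argument (Lemma \ref{Duality}) conditioned on $\mathcal{F}_s$ with the strictly positive exponential $U$ from Assumption \ref{ass00}, which correctly addresses the fact that the strict part of the theorem is stated only at time $0$ --- a point the paper glosses over.
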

\begin{proof}
(1) is clear. (2) is the result of Theorem \ref{strict_theorem}. For (3), for any $r \leq s \leq t$, $Y_r$ is the solution at time $r$ of the following BSDE, with terminal time $t$ and terminal value $\xi$,
$$ \begin{array}{lll}Y_r &= \xi + \int_r^t f(u, Y_u, Z_u ) du -\int_r^t  Z'_{u} dM_u\\[2mm]
&=  \xi + \int_s^t f(u, Y_u, Z_u ) du -\int_s^t Z'_{u} dM_u+\int_r^s f(u, Y_u, Z_u ) du -\int_r^s Z'_{u} dM_u\\[2mm]
&=Y_s + \int_r^s f(u, Y_u, Z_u ) du -\int_r^s Z'_{u} dM_u. \end{array}$$
This means that $Y_r$ is also the value of a solution to the BSDE with same driver $f$ and terminal time $s$, that is $\mathcal{E}^f_{r,s}(Y_s)= Y_r$. Then we have
\[\mathcal{E}_{r,s}^f (\mathcal{E}_{s,t}^f(\xi))= \mathcal{E}_{r,s}^f(Y_s)=Y_r = \mathcal{E}_{r,t}^f(\xi).\]
To prove (4), consider the following two BSDEs
$$ Y^{(1)}_s = \xi + \int_s^t f(u, Y^{(1)}_u, Z^{(1)}_u) du - \int_s^t (Z^{(1)}_u)' dM_u,~~s\in[0,t] $$
and
$$ Y^{(2)}_s = \mathbb{I}_A\xi + \int_s^t f(u, Y^{(2)}_u, Z^{(2)}_u) du - \int_s^t (Z^{(2)}_u)' dM_u,~~s\in[0,t].$$
Since $A \in \mathcal{F}_s$, we know by Assumption \ref{ass1} (I) that $\mathbb{I}_A f(u, Y^{(i)}_u, Z^{(i)}_u) = f(u, \mathbb{I}_A Y^{(i)}_u, \mathbb{I}_A Z^{(i)}_u)$, $i=1,2$, moreover,
$$ \mathbb{I}_{A}Y^{(1)}_s = \mathbb{I}_{A}\xi + \int_s^t f(u, \mathbb{I}_A Y^{(1)}_u, \mathbb{I}_A Z^{(1)}_u) du - \int_s^t \mathbb{I}_{A} (Z^{(1)}_u)' dM_u, ~~s\in[0,t]$$
and
$$ \mathbb{I}_{A}Y^{(2)}_s = \mathbb{I}_{A}\xi + \int_s^t f(u, \mathbb{I}_A Y^{(2)}_u, \mathbb{I}_A Z^{(2)}_u) du - \int_s^t \mathbb{I}_{A} (Z^{(2)}_u)' dM_u,~~s\in[0,t].$$
By uniqueness of solution of the BSDE given in Lemma \ref{existence}, it follows that $\mathbb{I}_{A}\mathcal{E}_{s,t}^f(\xi)=\mathbb{I}_A Y_s^{(1)} = \mathbb{I}_A Y_s^{(2)} = \mathbb{I}_A \mathcal{E}_{s,t}^f(\mathbb{I}_A\xi).$
\end{proof}
\begin{definition}\label{f_expectation}
Define, for $\xi \in L^2(\mathcal{F}_T)$ and a driver $f$,
\[\mathcal{E}_f(\xi):= \mathcal{E}^f_{0,T}(\xi), ~\text{and}~~ \mathcal{E}_f(\xi| \mathcal{F}_t):= \mathcal{E}^f_{t,T}(\xi).\]
$\mathcal{E}_f(\xi)$ is called $f$-expectation and $\mathcal{E}_f(\xi| \mathcal{F}_t)$ is called conditional $f$-expectation.
\end{definition}
The following properties follows directly from Definition \ref{f_expectation}, Proposition \ref{f_eval_lem} and Lemma \ref{BSDEST}.
\begin{prop}\label{f_exp_lem}
Let $s,t \leq T$, be two stopping times.\\
(1) For $\xi \in L^2(\mathcal{F}_t)$,  $\mathcal{E}_f(\xi |\mathcal{F}_t) = \xi$, a.s.\\
(2) If for any $\xi_1,\xi_2 \in L^2(\mathcal{F}_T)$,  $\xi_1 \geq \xi_2$, a.s., then $\mathcal{E}_f(\xi_1| \mathcal{F}_t) \geq \mathcal{E}_f(\xi_2| \mathcal{F}_t)$. Moreover,  $\mathcal{E}_f(\xi_1) = \mathcal{E}_f(\xi_2)$ $\Longleftrightarrow$ $\xi_1=\xi_2,$ a.s.\\
(3) For any $ s \leq t$, $\mathcal{E}_f(\mathcal{E}_f(\xi|\mathcal{F}_t)|\mathcal{F}_s) =\mathcal{E}_f(\xi| \mathcal{F}_s)$, a.s. Moreover, $\mathcal{E}_f(\mathcal{E}_f(\xi|\mathcal{F}_s)) =\mathcal{E}_f(\xi)$.\\
(4) For any $A\in \mathcal{F}_t$, $\mathbb{I}_{A}\mathcal{E}_{f}(\xi|\mathcal{F}_t) = \mathbb{I}_A \mathcal{E}_{f}(\mathbb{I}_A\xi|\mathcal{F}_t)$, a.s.
\end{prop}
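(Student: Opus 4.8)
The plan is to obtain each of the four properties as the stopping-time counterpart of the corresponding deterministic-time statement already established in Proposition \ref{f_eval_lem} and Theorem \ref{strict_theorem}. The only genuinely new ingredient is that here $s$ and $t$ are stopping times rather than deterministic times, and the device that bridges this gap is Lemma \ref{BSDEST}: for a bounded stopping time it supplies existence, uniqueness up to indistinguishability, and the RCLL structure of the solution of the BSDE on the stochastic interval $[\![s,t]\!]$, exactly as Lemma \ref{existence} does on $[0,T]$. Throughout I would read $\mathcal{E}_f(\xi\mid\mathcal{F}_t)=\mathcal{E}^f_{t,T}(\xi)=Y_t$, where $(Y,Z)$ is the RCLL solution of the BSDE on $[0,T]$ with terminal value $\xi$ and $Y_t$ is its value at the stopping time $t$; Lemma \ref{BSDEST} guarantees this evaluation is well defined and coincides with the solution of the BSDE restarted on $[\![t,T]\!]$.

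For (1), given $\xi\in L^2(\mathcal{F}_t)$ I would verify that the pair $(\xi,0)$ solves the BSDE on $[\![t,T]\!]$ with terminal value $\xi$: since $A\in\mathcal{F}_t\subseteq\mathcal{F}_u$ for $u\ge t$, the constant process $\xi$ is adapted on $[\![t,T]\!]$, and Assumption \ref{ass1}(I) gives $f(u,\xi,0)=0$, so both the driver term and the martingale integral vanish. Uniqueness from Lemma \ref{BSDEST} then forces $Y_t=\xi$. For (2), the monotonicity is immediate from Theorem \ref{strict_theorem} applied with $f_1=f_2=f$: that theorem yields the pathwise comparison $P(Y^{(1)}_r\le Y^{(2)}_r,\ \text{for any }r\in[0,T])=1$, and evaluating along the path at the stopping time $t$ gives $\mathcal{E}_f(\xi_1\mid\mathcal{F}_t)\ge\mathcal{E}_f(\xi_2\mid\mathcal{F}_t)$; the strict equivalence for $\mathcal{E}_f=\mathcal{E}^f_{0,T}$ is exactly the ``moreover'' clause of Theorem \ref{strict_theorem} under Assumption \ref{ass00}, read at the deterministic time $0$.

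For (4), I would mimic the deterministic argument of Proposition \ref{f_eval_lem}(4): because $A\in\mathcal{F}_t$, the indicator $\mathbb{I}_A$ is adapted on $[\![t,T]\!]$, so multiplying the BSDE for $\xi$ by $\mathbb{I}_A$ and using Assumption \ref{ass1}(I) in the form $\mathbb{I}_A f(u,Y,Z)=f(u,\mathbb{I}_AY,\mathbb{I}_AZ)$ shows that $(\mathbb{I}_AY,\mathbb{I}_AZ)$ solves the BSDE with terminal value $\mathbb{I}_A\xi$; uniqueness from Lemma \ref{BSDEST} then identifies $\mathbb{I}_A\mathcal{E}_f(\xi\mid\mathcal{F}_t)$ with $\mathbb{I}_A\mathcal{E}_f(\mathbb{I}_A\xi\mid\mathcal{F}_t)$.

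The step I expect to be most delicate is the tower property (3). The deterministic proof splits $\int_r^t=\int_s^t+\int_r^s$ to recognize $Y_r$ as the solution of the BSDE with terminal datum $Y_s$ at time $s$, and I would have to carry out this splitting over the stochastic intervals $[\![s,t]\!]$ and $[\![r,s]\!]$ and then invoke the uniqueness part of Lemma \ref{BSDEST} to identify the restarted solution. The technical heart is verifying that the restarted terminal datum $\mathcal{E}_f(\xi\mid\mathcal{F}_t)$ lies in $L^2(\mathcal{F}_t)$ and that the RCLL solution of the $[0,T]$ BSDE, evaluated at the stopping time, coincides with the value produced by Lemma \ref{BSDEST} on each stochastic interval; once this consistency is in place, the flow identity $\mathcal{E}^f_{r,s}\circ\mathcal{E}^f_{s,t}=\mathcal{E}^f_{r,t}$ transfers verbatim from the deterministic case.
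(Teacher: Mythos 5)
Your proposal is correct and follows essentially the same route as the paper: the paper's entire proof is the single remark that the properties ``follow directly from Definition \ref{f_expectation}, Proposition \ref{f_eval_lem} and Lemma \ref{BSDEST},'' which is exactly the reduction you carry out (the deterministic-time properties plus the stopping-time existence/uniqueness lemma). If anything, your write-up is more detailed than the paper's, notably the pathwise evaluation of Theorem \ref{strict_theorem}'s comparison at the stopping time and the uniqueness-based restart argument for the tower property.
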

\section{A Converse Comparison Theorem, for one-dimensional BSDE with Markov chain noise}
Our converse comparison theorem uses the theory of an $f$-expectation in the previous section. For $i=1,2$, consider the BSDEs with same terminal condition $\xi$:
$$Y^{(i)}_t = \xi + \int_t^T f_i(s, Y^{(i)}_s, Z^{(i)}_s ) ds
- \int_t^T (Z_{s}^{(i)})' dM_s,\hskip.4cmt\in[0,T].$$
\begin{thm}
Suppose  $f_1$ satisfies Assumption \ref{ass00}, Assumption \ref{ass1} and $f_2$ satisfies Assumption \ref{ass1}. Then the following are equivalent:\\
 i) For any $\xi  \in L^2(\mathcal{F}_T)$, $\mathcal{E}_{f_1}(\xi) \leq \mathcal{E}_{f_2}(\xi)$; \\
ii) $P(f_1(t,y,z) \leq f_2(t,y,z), ~ \text{for any} ~ (t,y,z)\in [0,T]\times \mathbb{R}\times\mathbb{R}^N) =1.$
\end{thm}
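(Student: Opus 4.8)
The plan is to prove the two implications separately, with essentially all the work in $\mathrm{(i)}\Rightarrow\mathrm{(ii)}$. The direction $\mathrm{(ii)}\Rightarrow\mathrm{(i)}$ is immediate from the comparison theorem: both BSDEs carry the same terminal value $\xi$, and (ii) gives in particular $f_1(t,Y^{(2)}_t,Z^{(2)}_t)\le f_2(t,Y^{(2)}_t,Z^{(2)}_t)$, a.e., a.s. Since $f_1$ satisfies Assumption \ref{ass00} and hence Assumption \ref{ass0}, Theorem \ref{strict_theorem} applies and yields $Y^{(1)}_t\le Y^{(2)}_t$ for all $t$; evaluating at $t=0$ gives $\mathcal E_{f_1}(\xi)\le\mathcal E_{f_2}(\xi)$.

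For the converse the heart of the matter is a differentiation formula recovering the driver from the $f$-expectation: for each fixed $t\in[0,T)$, $y\in\mathbb R$ and $z\in\mathbb R^N$, setting $\xi^t_\epsilon:=y+z'(M_{t+\epsilon}-M_t)$, I want to show
\[
\lim_{\epsilon\downarrow 0}\frac1\epsilon\big(\mathcal E^{f}_{t,t+\epsilon}(\xi^t_\epsilon)-y\big)=f(t,y,z)\quad\text{in }L^2 .
\]
To prove this I would let $(Y,Z)$ solve the BSDE on $[t,t+\epsilon]$ with terminal value $\xi^t_\epsilon$; taking $E[\cdot\mid\mathcal F_t]$ removes the stochastic integral and the mean-zero martingale part of $\xi^t_\epsilon$, so that $\mathcal E^{f}_{t,t+\epsilon}(\xi^t_\epsilon)-y=E[\int_t^{t+\epsilon}f(u,Y_u,Z_u)\,du\mid\mathcal F_t]$. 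After dividing by $\epsilon$ and writing $f(u,Y_u,Z_u)=f(t,y,z)+[f(u,Y_u,Z_u)-f(t,y,z)]$, I would bound the remainder by $l_1|Y_u-y|+l_2\|Z_u-z\|_{X_u}+|f(u,y,z)-f(t,y,z)|$ using \eqref{Lipchl} and Lemma \ref{normbound}.

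The delicate term is the one in $Z$. The pair $(\hat Y_s,\hat Z_s)=(y+z'(M_s-M_t),z)$ solves the driver-$0$ BSDE with the same terminal value, and $s\mapsto E[\int_t^{t+\epsilon}f(u,Y_u,Z_u)\,du\mid\mathcal F_s]$ is a martingale whose value at $t+\epsilon$ equals $\int_t^{t+\epsilon}f(u,Y_u,Z_u)\,du$. Applying Cauchy--Schwarz to this $du$-integral together with the a priori estimate $E\int_t^{t+\epsilon}|f(u,Y_u,Z_u)|^2du=O(\epsilon)$ gives the sharper control $E\int_t^{t+\epsilon}\|Z_u-z\|^2_{X_u}\,du=O(\epsilon^2)$, which is exactly what makes the $Z$-contribution negligible, since then $\tfrac1\epsilon E\int_t^{t+\epsilon}\|Z_u-z\|_{X_u}du=O(\sqrt\epsilon)$. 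The $Y$-term is handled the same way, while $\tfrac1\epsilon E\int_t^{t+\epsilon}|f(u,y,z)-f(t,y,z)|du\to0$ follows from the continuity in $t$ of Assumption \ref{ass1}(II) and dominated convergence. I expect this $O(\epsilon^2)$ estimate, obtained by conditioning rather than by a crude stability bound, to be the main computational step.

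To finish, I would first promote (i) to its conditional form $\mathcal E^{f_1}_{t,t+\epsilon}(\zeta)\le\mathcal E^{f_2}_{t,t+\epsilon}(\zeta)$, a.s., for all $\zeta\in L^2(\mathcal F_{t+\epsilon})$; this is the main conceptual obstacle, because (i) is only a statement at time $0$ whereas the differentiation formula reads off the driver at time $t$. The mechanism I would use is the localization built into the $f$-expectation: test (i) against terminal variables $\zeta\,\mathbb I_A$ with $A\in\mathcal F_t$ and invoke the local property Proposition \ref{f_exp_lem}(4) together with $f_i(t,\cdot,0)=0$ (Assumption \ref{ass1}(I)), so that the contribution off $A$ drops out, combined with the time-consistency Proposition \ref{f_exp_lem}(3) and the strict monotonicity supplied by Assumption \ref{ass00}, to force the conditional inequality. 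Granting it, applying the differentiation formula to $f_1$ and $f_2$ and passing to the $L^2$-limit in $\mathcal E^{f_1}_{t,t+\epsilon}(\xi^t_\epsilon)\le\mathcal E^{f_2}_{t,t+\epsilon}(\xi^t_\epsilon)$ gives $f_1(t,y,z)\le f_2(t,y,z)$, a.s., for each fixed $(t,y,z)$. The last task is to collapse these into a single null set valid for all $(t,y,z)$ at once: I would take $y,z$ rational, use the Lipschitz continuity \eqref{Lipchl} in $(y,z)$ and the continuity in $t$ to extend to all $(t,y,z)$, and apply Fubini to exchange the ``a.s.'' and ``a.e.-$t$'' quantifiers, yielding (ii).
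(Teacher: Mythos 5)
Your direction ii) $\Rightarrow$ i) is exactly the paper's (an application of Theorem \ref{strict_theorem}), and your differentiation formula is a reasonable transplant of the Briand--Coquet--M\'emin--Peng representation technique to the chain setting; the $O(\epsilon^2)$ estimate you sketch would indeed go through. The genuine gap is the step you yourself flag as the main obstacle: promoting the time-$0$ hypothesis (i) to the conditional inequality $\mathcal{E}^{f_1}_{t,t+\epsilon}(\zeta)\leq\mathcal{E}^{f_2}_{t,t+\epsilon}(\zeta)$ a.s. The localization mechanism you propose does not close. Write $u_i=\mathcal{E}^{f_i}_{t,t+\epsilon}(\zeta)$ and suppose $P(A)>0$ for $A=\{u_1>u_2\}\in\mathcal{F}_t$. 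Testing (i) on $\xi=\zeta\mathbb{I}_A+u_2\mathbb{I}_{A^c}$ and unravelling with Proposition \ref{f_eval_lem} (1), (3), (4) and Assumption \ref{ass1}(I) gives
\begin{equation*}
\mathcal{E}^{f_1}_{0,t}\bigl(\mathbb{I}_A u_1+\mathbb{I}_{A^c}u_2\bigr)\leq \mathcal{E}^{f_2}_{0,t}(u_2),
\end{equation*}
while strict monotonicity of $\mathcal{E}^{f_1}_{0,t}$ (Assumption \ref{ass00}) gives $\mathcal{E}^{f_1}_{0,t}\bigl(\mathbb{I}_A u_1+\mathbb{I}_{A^c}u_2\bigr)>\mathcal{E}^{f_1}_{0,t}(u_2)$. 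Combining the two, all you obtain is $\mathcal{E}^{f_1}_{0,t}(u_2)<\mathcal{E}^{f_2}_{0,t}(u_2)$, which is perfectly consistent with (i): no contradiction. The same thing happens if you test on $\zeta\mathbb{I}_A$ alone. Every variant of this argument ends with the two drivers' expectations evaluated at a common random variable, but in the direction that (i) already grants; to force a contradiction you would need the \emph{reverse} comparison of $\mathcal{E}^{f_1}_{0,t}$ and $\mathcal{E}^{f_2}_{0,t}$, i.e.\ essentially the statement being proved. (For an \emph{equality} hypothesis this promotion does work, via the characterization of the conditional expectation by $\mathcal{E}(\mathbb{I}_A\xi)=\mathcal{E}(\mathbb{I}_A\eta)$ for all $A\in\mathcal{F}_t$; for an inequality it fails.) Since your differentiation formula reads off $f(t,y,z)$ only through the $\mathcal{F}_t$-measurable quantity $\mathcal{E}^{f}_{t,t+\epsilon}(\xi^t_\epsilon)$, and the drivers here are predictable, hence random, the whole route collapses without this step. (Only if $f_1,f_2$ were deterministic could you bypass conditioning, since then $\mathcal{E}^{f_i}_{0,T}(\xi^t_\epsilon)=\mathcal{E}^{f_i}_{0,t}\bigl(y+\epsilon f_i(t,y,z)+o(\epsilon)\bigr)=y+\epsilon f_i(t,y,z)+o(\epsilon)$ by the fixed-point property on constants; the theorem, however, is not restricted to deterministic drivers.)

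This difficulty is precisely what the paper's proof is built to avoid: assuming ii) fails, it introduces the stopping time $\tau_\delta$ at which $f_2\leq f_1-\delta$, runs the forward SDEs $dY^i(t)=-f_i(t,Y^i(t),z)dt+zdM_t$ from the \emph{deterministic} initial value $Y^i(\tau_\delta)=y$, and uses time consistency through stopping times (Proposition \ref{f_exp_lem}) to conclude that the \emph{unconditional} quantities satisfy $\mathcal{E}_{f_1}(Y^{(1)}(\tau'_\delta))=\mathcal{E}_{f_2}(Y^{(2)}(\tau'_\delta))=y$; since $Y^{(1)}(\tau'_\delta)\leq Y^{(2)}(\tau'_\delta)$ with strict inequality on a set of positive probability, strict monotonicity and (i) applied to the single random variable $Y^{(2)}(\tau'_\delta)$ give $y<\mathcal{E}_{f_1}(Y^{(2)}(\tau'_\delta))\leq\mathcal{E}_{f_2}(Y^{(2)}(\tau'_\delta))=y$, a contradiction. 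In short: your plan needs conditional domination, which cannot be extracted from (i) by monotonicity and localization; you would have to adopt a scheme like the paper's (or supply a genuinely new argument for that step) for the proof to go through.
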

\begin{proof}
ii) $\Rightarrow$ i) is given by Theorem \ref{strict_theorem}.\\
Let us prove i) $\Rightarrow$ ii). For each $\delta > 0$ and $(y,z) \in \mathbb{R}\times \mathbb{R}^N$, introduce the stopping time:
\[\tau_{\delta}=\tau_{\delta}(y,z) = \inf\{t \geq 0;~ f_2 (t, y, z) \leq f_1 (t, y,z)-\delta\} \wedge T.\]
Suppose ii) does not hold,
then there exists $\delta> 0$ and $(y,z) \in \mathbb{R}\times \mathbb{R}^N$ such that  $P(\tau_{\delta}(y,z) < T) >0.$
For $(\delta, y, z)$ such that $P(\tau_{\delta}(y,z) < T) >0$, consider for $i=1,2$, the following SDE
$$\left \{ \begin{array}{ll}
 dY^{i}(t) = - f_i (t, Y^i(t),z) dt + zdM_t, ~~~ t\in [\tau_{\delta}, T], \\[2mm]
 Y^i(\tau_{\delta}) =y.
\end{array} \right.$$
For $i = 1,2$, the above equation admits a unique solution $Y^{(i)}$ (See Elliott\cite{elliott}, Chapter 14). Define:
\begin{align*}
& \tau'_{\delta} = \inf\{t \geq \tau_{\delta};~ f_2(t, Y^{(2)}(t),z) \geq f_1(t, Y^{(1)}(t), z) - \frac{\delta}{2}\}\wedge T,
\end{align*}
with $\tau'_{\delta} = T$ if $\tau_{\delta}=T$.
We know $\Omega= \{\tau_{\delta} \leq \tau'_{\delta}\} = \{\tau_{\delta} < \tau'_{\delta}\} \cup \{\tau_{\delta} = \tau'_{\delta}\}$, which is a disjoint union, and $\{\tau_{\delta} = \tau'_{\delta}\} = \{\tau_{\delta} = T\}$. Hence, $\{\tau_{\delta} < \tau'_{\delta}\} = \{\tau_{\delta} = T\}^c = \{\tau_{\delta} < T\}$. It follows that $P (\tau_{\delta} < \tau'_{\delta}) >0.$ \\
Set $\tilde{Y} = Y^{(1)} - Y^{(2)}$, then
\[d \tilde{Y}(t) = (f_2(t, Y^{(2)}(t), z) -f_1(t, Y^{(1)}(t), z)) dt.\]
Hence, by taking the integral of the above from $\tau_{\delta}$ to $\tau'_{\delta}$ and $\tilde{Y}(\tau_{\delta})=0$, we have
\begin{equation}\label{lem21_1}
\tilde{Y}(\tau'_{\delta})=Y^{(1)}(\tau'_{\delta})-Y^{(2)}(\tau'_{\delta})  \leq - \frac{\delta}{2} (\tau'_{\delta} - \tau_{\delta})\leq 0.
\end{equation}
Thus,
\[\dfrac{d\tilde{Y}(t)}{dt} \mathbb{I}_{\{t \in [\tau_{\delta}, \tau'_{\delta})\}}\leq - \frac{\delta}{2} \mathbb{I}_{\{t \in [\tau_{\delta}, \tau'_{\delta})\}}, ~~ \tilde{Y}(\tau_{\delta})=0.\]
So we deduce
\begin{equation} \label{lem21}
\tilde{Y}(\tau'_{\delta}) = Y^{(1)}(\tau'_{\delta})-Y^{(2)}(\tau'_{\delta}) \leq -\dfrac{\delta}{2} (\tau'_{\delta} - \tau_{\delta})< 0, ~~ \text{on} ~~ \{\tau_{\delta} < \tau'_{\delta}\}.
\end{equation}
Note, $(Y^{(i)}, z)$, $i=1,2$ are solutions of BSDEs with coefficients $(f_i, Y^{(i)}(T))$. It follows from Proposition \ref{f_exp_lem} (3), that
\[\mathcal{E}_{f_1}(Y^{(1)}(\tau'_{\delta})| \mathcal{F}_{\tau_{\delta}})=\mathcal{E}_{f_1}(\mathcal{E}_{f_1}(Y^{(1)}(T)| \mathcal{F}_{\tau'_{\delta}})|\mathcal{F}_{\tau_{\delta}}) = \mathcal{E}_{f_1}(Y^{(1)}(T)| \mathcal{F}_{\tau_{\delta}}) = y,\]
and similarly
\[\mathcal{E}_{f_2}(Y^{(2)}(\tau'_{\delta})| \mathcal{F}_{\tau_{\delta}}) = \mathcal{E}_{f_2}(Y^{(2)}(T)| \mathcal{F}_{\tau_{\delta}}) = y.\]
Moreover, again from Proposition \ref{f_exp_lem} (3),
\[\mathcal{E}_{f_1}(Y^{(1)}(\tau'_{\delta}))=\mathcal{E}_{f_2}(Y^{(2)}(\tau'_{\delta}))=y.\]
On the other hands, by \eqref{lem21_1} and \eqref{lem21}, we know
\[Y^{(1)}(\tau'_{\delta})\leq Y^{(2)}(\tau'_{\delta})\]
and
\[P(Y^{(1)}(\tau'_{\delta}) < Y^{(2)}(\tau'_{\delta})) >0.\]
It then follows from Definition \ref{f_expectation} and Proposition \ref{f_exp_lem} (2) that
\[y = \mathcal{E}_{f_1}(Y^{(1)}(\tau'_{\delta})) < \mathcal{E}_{f_1}(Y^{(2)}(\tau'_{\delta})),\]
but from i), we have
\[\mathcal{E}_{f_1}(Y^{(2)}(\tau'_{\delta})) \leq \mathcal{E}_{f_2}(Y^{(2)}(\tau'_{\delta})) =y,\]
which is a contradiction. So we conclude ii) holds.
\end{proof}




\end{document}